\pgfplotsset{compat=newest}
\newcommand{\bigo}[1]{\mathcal{O}\left(#1\right)}
\newcommand{\subscript}[2]{$#1 _ #2$}
\newcommand{\B}{\mathcal{ B}}
\newcommand{\K}{\mathcal{ K}}
\renewcommand{\H}{ H}
\newcommand{\R}{\mathbb{ R}}
\newcommand{\ri}{\mathcal{ R}}
\newcommand{\N}{\mathbb{ N}}
\renewcommand{\r}{R}
\newcommand{\norm}[1]{\left|\left|#1\right|\right|}
\newcommand{\embed}{\hookrightarrow}
\newcommand{\rhofem}{\rho_n^{H,h}}
\newcommand{\rhofemprev}{\rho_{n-1}^{H,h}}
\newcommand{\pifem}{\pi_n^H}
\newcommand{\pifemprev}{\pi_{n-1}^H}
\newcommand{\dealii}{\texttt{deal.ii}}
\newcommand{\mLL}{{L^2(\Omega;L^2(Y))}}
\newcommand{\MLL}{{L^2(\Omega)}}
 \newtheorem{lemma}{Lemma}
\newtheorem{proposition}{Proposition}
\newtheorem{definition}{Definition}
\newtheorem{theorem}{Theorem}
\newtheorem*{remark}{Remark}
\title{A semidiscrete Galerkin scheme for a two-scale coupled elliptic-parabolic system: well-posedness and convergence approximation rates}
\author[1]{Martin Lind}
\author[1]{Adrian Muntean}
\author[1]{Omar Richardson\thanks{email: omar.richardson@kau.se}}
\affil[1]{Department of Mathematics and Computer Science, Karlstad University, Sweden}
\begin{document}

\maketitle

\begin{abstract}
In this paper, we study the numerical approximation of a coupled system of elliptic-parabolic equations posed on two separated spatial scales. The model equations describe the interplay between macroscopic and microscopic pressures in an unsaturated heterogeneous medium with distributed microstructures as they often arise in modeling reactive flow in cementitious-based materials. Besides ensuring the well-posedness of our two-scale model, we design two-scale convergent numerical approximations and prove \textit{a priori} error estimates for the semidiscrete case. We complement our analysis with simulation results illustrating the expected behaviour of the system. 

  \textit{Keywords:} elliptic-parabolic system, weak solutions, Galerkin approximations, distributed microstructures, error analysis, macroscopic mesh refinement strategy

  \textit{MSC (2010):} 35K58, 65N30, 65N15

\end{abstract}
\section{Introduction}

This work is concerned with the design and approximation of systems of evolution equations posed on two distinct spatial scales. The systems we have in mind involve coupled partial differential equations (PDEs) that explicitly encode two-scale interactions via transmission boundary conditions as well as production terms; see e.g. the PDE structures entering double or dual porosity models, models with distributed microstructures, fissured-media equations, as well as general two-scale models. Such models arise as descriptions of reactive flow through geometrically-structured porous media.

If the geometry of the porous media has a dual porosity structure, and hence, characteristic scales can possibly be separated, then PDE models with distributed microstructures are in theory able to describe the relevant multiscale spatial interactions like those occurring in gas-liquid mixtures. Now, the challenge shifts from the multiscale modeling to the computer implementation of multiscale models. Consequently, in this work we concern ourselves with the two-scale computability issue -- complex systems of evolution equations acting on two spatial scales are notoriously hard to compute, especially if moving boundaries or stochastic dynamics are involved within e.g.~the distributed microstructures. Combined with the so-called curse of dimensionality, this results in a computational problem of very high complexity.

In this paper, we discuss the case of an elliptic-parabolic coupling. We consider a coupled system of partial differential equations connected to multiscale descriptions of the evolution of the pressure arising in a compressible air-liquid mixture that distributes over two spatial scales (one called macroscopic, and one microscopic). This situation arises, for instance, in cementitious materials within concrete members -- typical composite porous materials where the amount of displaceable liquid is low and is practically trapped in the internal structure of the porous medium. The derivation of our particular model originates from applying a formal two-scale homogenization to a particular scaling of the level set equation coupled with Stokes equations for fluid flow (see \cite{lic} for details).
Highlights of the more non-standard challenges of this system of equations are the two-scale coupling, the mismatch in structure between the two equations, i.e. the presence of a time derivative in the microscopic equation and its absence in the microscopic one, as well as the nonlinear right hand side in the macroscopic equations.
In order to tackle these challenges we perform most of our analysis on the finite element level, using techniques from e.g. \cite{larsson2008partial}, \cite{ciarlet02} and \cite{thomee1984}.

If we assume the interface between air and liquid to remain fixed for a reasonable time span, then using homogenization techniques for locally periodic microstructures (compare with e.g. \cite{chechkin98}) leads in suitable scaling regimes to a so-called \textit{two-pressure evolution systems}.
This system can be expressed as coupled elliptic-parabolic equations that describe the joint evolution in time $t\in (0,T)$ ($T<+\infty$) of a parameter-dependent \textit{microscopic pressure} $R\rho (t,x,y)$ (where $R$ represents the universal gas constant) evolving with respect to $y\in Y\subset \mathbb{R}^d$ for any given macroscopic spatial position $x\in \Omega$ and a \textit{macroscopic pressure} $\pi(t,x)$ with $x\in \Omega$ for any given $t$.
An illustration of the two-scale geometry we have in mind is depicted in Figure~\ref{fig:two_scale_structure}.

\begin{figure}
    \centering
    \begin{tikzpicture}[scale=0.4]
    \def\l{10};
    \def\r{1};
    \draw[black,fill=gray] plot [smooth cycle] coordinates {(\r*8, 11 + \r*2) (\r*9, 11 + \r*4) (\r*8, 11 + \r*7.5) (\r*5.5, 11 + \r*9) (\r*4, 11 + \r*8.5) (\r*2, 11 + \r*5)  (\r*2.5, 11 + \r*2) (\r*7, 11 + \r*1)};

    \draw [dashed, thick, step=\l] (0,0) grid (\l,\l);
    \draw [black] plot [smooth cycle] coordinates {(\l*0.2, \l*0.2) (\l*0.2, \l*0.6) (\l*0.5,\l*0.8) (\l*0.6, \l*0.7) (\l*0.85, \l*0.5) (\l*0.7, \l*0.3) (\l*0.6, \l*0.4)};
    \draw [black, fill=darkgray] plot [smooth cycle] coordinates {(\l*0.2, \l*0.2) (\l*0.2, \l*0.6)  (\l*0.85, \l*0.5) (\l*0.7, \l*0.3) (\l*0.6, \l*0.4)};
    \node at (0.5*\l,0.63*\l) (robin) {$\Gamma_R$};
    \node at (0.5*\l,0.30*\l) (neumann) {$\Gamma_N$};
    \node at (0.6*\l,0.30*\l+17.5) (macroboundary) {\large $\partial \Omega$};
    \node at (0.4*\l,0.45*\l) (micro) {\huge $Y$};
    \node at (0.6*\l,0.45*\l + 12) (macro) {\huge $\Omega$};
    \node at (\r*4+0.5, \r *15) (x) {\large $x$};
    \draw[->, thick,dashed] (x) -- (\r*4+0.5, \r *9);
\end{tikzpicture}
    \caption{The macroscopic domain $\Omega$ and microscopic pore $Y$ at $x\in\Omega$.}
    \label{fig:two_scale_structure}
\end{figure}
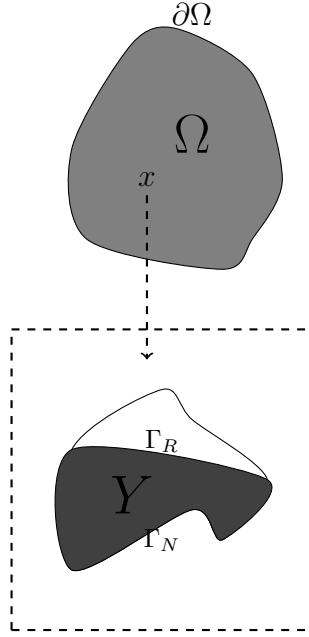

In the final part of this manuscript, we discuss a numerical implementation of this scheme in the finite element library \dealii{} (\cite{dealII}). Inspired by the Heterogeneous Multiscale Method framework (cf. e.g.\cite{engquist07}), we propose an implementation strategy that resolves the scale separation inherent in the two-scale structure of our problem.

We consider the following problem, posed on two spatial scales $\Omega\subset \R^{d_1}$ and $Y \subset \R^{d_2}$ with $d_1,d_2 \in \{1,2,3\}$ in the time interval $t\in S := (0,T)$ for some $T>0$. Find the two \emph{pressures} $\pi: S\times\Omega \to \R $ and $\rho: S\times\Omega\times Y\to \R$ that satisfy:

\begin{align}
    \label{eq:main_ellip}&-A\Delta_x\pi=f(\pi,g(\rho))  &\mbox{ in }S\times\Omega,\\
    \label{eq:main_para}&\partial_t\rho-D\Delta_y\rho = 0  &\mbox{ in }S\times\Omega\times Y,\\
    \label{eq:main_robin}&D\nabla_y\rho\cdot n_y= k(\pi+p_F-R\rho)&\mbox{ in } S\times\Omega\times\Gamma_R,\\
    \label{eq:main_neumann}&D\nabla_y\rho\cdot n_y=0&\mbox{ in }S\times\Omega\times\Gamma_N,\\
    \label{eq:main_dirichlet}&\pi=0 &\mbox{ in }S\times\partial\Omega,\\
    \label{eq:main_initial}&\rho(t=0)=\rho_I &\mbox{ in } \overline{\Omega\times Y},
\end{align}
where $\Gamma_R \cup \Gamma_N = \partial Y$, $\Gamma_R \cap \Gamma_N = \emptyset$ and $f,g$ are functions discussed in more detail below. We refer to \eqref{eq:main_ellip}-\eqref{eq:main_initial} as ($P_1$).

Note that $(P_1)$ describes the interaction between a compressible viscous fluid (with density $\rho$) in a porous domain $\Omega$, where the pores are partly filled with a gas that exerts an average (macroscopic) pressure $\pi$.
The interaction between the fluid and the gas is determined by the right hand side of \eqref{eq:main_ellip} and the microscopic boundary condition in \eqref{eq:main_robin}, through the fluid-gas interface represented by $\Gamma_R$.
The mathematical problem stated in ($P_1$), contains a number of dimensional constant parameters: $A$ (gas permeability), $D$ (diffusion coefficient for the gaseous species),  $p_F$ (atmospheric pressure) and $\rho_F$ (gas density).
In addition, we need the dimensional functions $k$ (Robin coefficient) and $\rho_I$ (initial liquid density). Except for the Robin coefficient $k$, all the model parameters and functions are either known or can be accessed directly via measurements.
Even if the boundary $\Gamma_R$ is not accessible for measurements of parameters such as $\kappa$, this can be compensated by measuring on the boundary $\Gamma_N:=\partial Y\setminus\Gamma_R$. See e.g. \cite{lind16}.
We would like to point out that although we choose $A$ and $D$ constant, the analysis would be analogous for a system with $A = A(x)$ and $D = D(y)$, provided they satisfy certain suitable assumptions.

In this context, we prove existence and uniqueness of a discrete-in-space, continuous-in-time finite element element approximation and prove its convergence to the true solution of ($P_1$).
The main results of this contribution are the well-posedness of the Galerkin approximation (Theorem~\ref{thm:ex_un}), convergence rates for the approximation (Theorem~\ref{thm:priori}), and the confirmation of the expected convergence rate with a numerical simulation (Section~\ref{sec:implementation}).

The choice of problem and approach is in line with other investigations running for two-scale systems, or systems with distributed microstructures, such as \cite{lind15,sebamPhD,PesShow}. The reader is also referred to the FE$^2$ strategies developed by the engineering community to describe the evolution of mechanical deformations in structured heterogeneous materials; see e.g. \cite{Varvara} and references cited therein. Other classes of computationally challenging two-scale problems are mentioned, for instance, in \cite{Redeker}, where the pore scale model has \textit{a priori} unknown boundaries, and in \cite{Ijioma} for a smoldering combustion scenario. This paper continues an investigation started in related works.
In \cite{lind16}, we study the solvability issue and derive inverse Robin estimates for a variant of this model problem. Two-scale Galerkin approximations have been derived previously for related problem settings; see e.g. our previous investigations \cite{radu10}, \cite{RIMS}, \cite{Chal}, and \cite{lind15}.

The rest of this paper is structured as follows.
In Section~\ref{sec:preliminaries}, we discuss the technical concepts and requirements we need before starting our analysis.
Then, in Section~\ref{sec:wellposedness}, we show the Galerkin approximation is well-posed and converges to the weak solution of the original system.
In Section~\ref{sec:convergence}, we prove \emph{a priori} convergence rates for the Galerkin approximation.
Next, in Section~\ref{sec:implementation}, we propose a fully discrete scheme, an implementation of said scheme, and approximation errors of the finite element solutions on subsequently refined grids.
Finally, in Section~\ref{sec:conclusion}, we conclude this paper and provide an outlook into future research.
\section{Concept of weak solution, assumptions and technical preliminaries}
\label{sec:preliminaries}

\subsection{Weak solutions}
We look for solutions to ($P_1$) in the weak sense. This is motivated by the fact that the underlying structured media can be of composite type, allowing for discontinuities in the model parameters. However, already at this stage it is worth mentioning that the solutions to ($P_1$) are actually more regular than stated, i.e. with minimal adaptations of the working assumptions the regularity of the solutions can be lifted so that they turn out to be strong or even classical. We will lift their regularity only when needed.

\begin{definition}[Weak solution]
    A weak solution of ($P_1$) is a pair $(\pi,\rho)\in L^2(S;\H_0^1(\Omega))\times  L^2(S; L^2(\Omega;\H^1(Y)))$ 
    such that $\partial_t\rho\in L^2(S\times\Omega\times Y)$ and for all test functions $(\varphi,\psi) \in \H^1_0(\Omega)\times  L^2(\Omega;\H^1(Y))$ the following identities are satisfied

\begin{equation}
    \label{eq:weak_pi_cont}
    A\int_\Omega\nabla_x\pi\cdot\nabla_x\varphi dx=\int_\Omega {f(\pi,g(\rho))}\varphi dx,
\end{equation}
and
\begin{equation}
    \label{eq:weak_rho_cont}
    \int_\Omega\int_Y\partial_t\rho\psi dydx+D\int_\Omega \int_Y\nabla_y \rho\cdot\nabla_y\psi dydx= \kappa\int_\Omega\int_{\Gamma_R}(\pi+p_F-R\rho)\psi d\sigma_ydx,
\end{equation}
for almost every $t \in S$. Furthermore, we require that $\rho(0,x,y)=\rho_I(x,y)$, which is provided. For $\pi$, we have $\pi(0,x)=\pi_I(x)$ where $\pi_I\in \H^1_0(\Omega)$ is the weak solution of
\begin{equation}
    \begin{split}
        -A\Delta_x\pi&=f(\pi,g(\rho_I))  \mbox{ in }\Omega,\\
        \pi &= 0  \mbox{ in }\partial\Omega.
    \end{split}
    \label{eq:init_pi}
\end{equation}
Note that \eqref{eq:init_pi} is a stationary elliptic equation giving access to the value of $\pi$ from \eqref{eq:main_ellip} at time $t=0$.
Above, $d\sigma_y$ denotes the surface measure on $\partial Y$.
\end{definition}

\subsection{Assumptions}

We introduce a set of assumptions that allows us to ensure the weak solvability and approximation of $(P_1)$.
\begin{enumerate}[label=(\subscript{A}{\arabic*})]
    \item\label{as:lips_boundary} The domains $\Omega$ and $Y$ are convex polygons.

    \item\label{as:positive_params} All model parameters are positive; in particular $A, D, R, p_F$, $\kappa$.
    
    \item\label{as:parameterCond} The parameter $A$ satisfies
    $$
    \frac{2\mathcal{C}_\Omega}{A}<1
    $$
    where $\mathcal{C}_\Omega$ denotes the constant in Poincar\'e's inequality for $\H^1_0(\Omega)$ (see (\ref{eq:poincareIneq}) below).

    \item\label{as:smooth_initial} $\rho_I \in  L^2(\Omega; \H^1(Y))$.

    \item\label{as:rhs} $f:\mathbb{R}^2\rightarrow\mathbb{R}$ in \eqref{eq:main_ellip} satisfies the following conditions:
        \begin{enumerate}[label=({\roman*})]
            \item the weak partial derivatives $D_1f, D_2f\in L^\infty(\mathbb{R}^2)$ and
            \begin{equation*}
            \|D_1f\|_{ L^\infty(\mathbb{R}^2)}\le\theta\quad{\rm and}\quad\|D_2f\|_{ L^\infty(\mathbb{R}^2)}\le\theta
            \end{equation*}
            where $\theta$ is small enough to satisfy all of the following:
            \begin{equation*}
                \theta<\frac{2\mathcal{C}_\Omega}{A},\quad\theta<\frac{1}{\mathcal{C}_\Omega},\quad 1+4\theta^2<\frac{3}{2}.
            \end{equation*}
            Note that this implies $\theta<1$.
            \item $f(0,s)=0$ for all $s\in\mathbb{R}$,
            \item $|f(r,s)|\le C_f\min(|r|,|r|^\alpha)$ for some constant $C_f>0$ and some $\alpha\in(0,1)$ and all $s\in\mathbb{R}$,
        \end{enumerate}
        \item\label{as:g-functional}
        $g$ is a linear functional such that
        \begin{equation*}
        \int_\Omega g(\rho)^2dx\le C_g\|\rho\|^2_{ L^2(\Omega;\H^1(Y))}
        \end{equation*}
        for some constant $C_g>0$.
\end{enumerate}
\begin{remark}
\ref{as:lips_boundary}, \ref{as:positive_params} and \ref{as:smooth_initial} are straightforward assumptions related to the physical setting.
In \ref{as:positive_params}, all parameters are constant as a result of the periodic homogenization procedure behind the structure of our model. A deviation from periodicity would introduce an $x$-dependence in the coefficients. See \cite{lic} for a derivation of the model under consideration.
\ref{as:lips_boundary} is a condition to ease the interaction with the finite element mesh.
\ref{as:rhs} and \ref{as:g-functional} are technical conditions required to prove well-posedness of the problem.
\end{remark}
\begin{remark}
Note that the condition $\|D_jf\|_{ L^\infty(\mathbb{R}^2)}\le\theta<1$ implies that $f(\cdot,s)$ and $f(r,\cdot)$ are contractions for any $(r,s)\in\mathbb{R}^2$.
\end{remark}
\begin{remark}
Examples of a nonlinearity $f$ satisfying \ref{as:rhs} is
\begin{equation*}
    f(r,s)=\theta\min(|r|,|r|^\alpha)\min(1,|s|),
\end{equation*}
for some $\alpha \in (0,1)$ or 
\begin{equation*}
    f(r,s) = |\theta\sin( r)\cos(s)|.
\end{equation*}
An example of a functional $g$ satisfying \ref{as:g-functional} is
\begin{equation}
\nonumber
 g(\rho)(t,x)=\int_{\Gamma_R}L(\rho(t,x,y))d\sigma_y, 
\end{equation}
where $L$ is a linear map. The fact that $g$ defined as above satisfies \ref{as:g-functional} is a consequence of the interpolation-trace inequality (see (\ref{eq:trace_ineq}) below).
\end{remark}

\subsection{Technical preliminaries}
\label{sec:notation}

The rest of this section introduces the notation of the functional spaces and norms used in the paper.
Let $\operatorname{Tr_1}: L^2(\Omega)\rightarrow L^2(\partial\Omega)$ denote the (macroscopic) trace operator defined as
\begin{equation*}
    \operatorname{Tr_1}(u)=\left.u\right|_{\partial\Omega},
\end{equation*}
and let $\operatorname{Tr_2}: L^2(\Omega;\H^1(Y))\rightarrow L^2(\Omega\times\partial Y)$ denote the microscopic trace operator defined as
\begin{equation*}
    \operatorname{Tr_2}(u)=\left.u\right|_{\Omega\times\partial Y}.
\end{equation*}

Let $f,g: D \to \R$. Then the Lebesgue and Sobolev norms are defined as follows:
\begin{equation}
 \left|\left|f\right|\right|_{ L^p(D)} := \begin{cases}
     \left( \int_D|f(x)|^pdx\right)^{1/p} &\mbox{ for }1\leq p < \infty,\\
     \operatorname{ess}\operatorname{sup} \left\{ |f(x)| : x \in D \right\}&\mbox{ for }p = \infty,
    \end{cases}
\end{equation}

\begin{equation}
 \left|\left|f\right|\right|_{\H^k(D)} :=
 \left(\sum_{|\alpha|\leq k} \int_D \left|\partial^\alpha f\right|^2 dx\right)^{1/2}\mbox{ for }  k \in \N,
\end{equation}
with $\partial^\alpha f$ denoting derivatives in the weak sense.

Furthermore, for $ L^2(D)$ and $\H^k(D)$ we have the following inner products.
\begin{equation}
    \langle f,g \rangle_{ L^2(D)} := \int_D f(x)g(x) dx,
\end{equation}

\begin{equation}
    \langle f,g \rangle_{\H^k} := \sum_{|\alpha|\leq k} \langle \partial^\alpha f,\partial^\alpha g\rangle_{ L^2(D)}.
\end{equation}

Moreover, we use $\H^1_0(D)$ to denote the following function space:
\begin{equation}
    \H^1_0(D) := \left\{ u \in \H^1(D): \operatorname{Tr}_1(u)= 0\right\}.
    \label{eq:sobolev_compact}
\end{equation}

Let $B$ be a Banach space with norm $||\cdot||_B$. Then $u$ belongs to Bochner space $ L^2(S;B)$ if it has a finite $ L^2(S;B)$ norm, defined as follows:
\begin{equation}
    \left|\left|u\right|\right|_{ L^2(S,B)} := \left( \int_S||u(t)||_B^2dt\right)^{1/2}.
\end{equation}

An introduction to the concepts of Lebesgue and Bochner integration as well as on inner products and norms can be found in many functional analysis textbooks (e.g. \cite{adams2003}).

\subsection{Auxiliary results}
For the benefit of the reader, we collect a number of well-known results that we will use in the paper.

\begin{lemma}[Young's inequality]
\label{lem:young_ineq}
Let $E\subseteq\mathbb{R}^d$ be a measurable set and $u,v\in L^2(E)$. For any $\epsilon>0$ there holds
\begin{equation}
\int_E|u(x)v(x)|dx\le \epsilon\|u\|^2_{ L^2(E)}+\frac{1}{4\epsilon}\|v\|^2_{ L^2(E)}.
\end{equation}
\end{lemma}

It is well-known that $\operatorname{Tr_2}$ defined between the function spaces specified above is a bounded linear operator. Thus, quantities of the type $\|u\|_{ L^2(\Omega\times\partial Y)}$ are well defined for $u\in  L^2(\Omega;\H^1(Y))$.
\begin{lemma}[Interpolation-trace inequality \cite{ladyzenskaja}]
\label{lem:trace_ineq}
 Let $u\in  L^2(\Omega;\H^1(Y))$, then for any $\epsilon>0$ there holds
\begin{equation}
\|u\|^2_{ L^2(\Omega\times\Gamma_R)}\le\varepsilon \|\nabla_y u\|^2_{ L^2(\Omega\times Y)} +C^*\max(\epsilon,\epsilon^{-1})\|u\|^2_{ L^2(\Omega\times Y)}
        \label{eq:trace_ineq}
    \end{equation}
where $C^*$ is a constant depending only on $Y$ and $\Gamma_R$.
\end{lemma}

\begin{lemma}[Poincar\'e's inequality]
There exists a constant $\mathcal{K}_\Omega$ depending only on $\Omega$ such that 
\begin{equation}
\label{eq:poincareIneq}
\|u\|^2_{ L^2(\Omega)}\le\mathcal{C}_\Omega\|\nabla_xu\|^2_{ L^2(\Omega)}    
\end{equation}
for all $u\in\H^1_0(\Omega)$.
\end{lemma}

\begin{lemma}[Aubin-Lions lemma]
    \label{lem:aubinlions}
Let $B_0 \subset\subset B \embed B_1$ be Banach spaces, i.e. $B_0$ be compactly embedded in $B$ and $B$ be continuously embedded in $B_1$. Let
\begin{equation}
    W := \left\{ u\in  L^2\left( S;B_0 \right) | \partial_t u \in  L^2\left( S; B_1 \right) \right\}.
    \label{eq:aubinlions}
\end{equation}
Then the embedding of $W$ into $ L^2\left( S;B \right)$ is compact.
\end{lemma}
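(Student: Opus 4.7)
The plan is to establish the classical chain: Ehrling's interpolation inequality, a uniform-in-time translation estimate, and then a Riesz--Fréchet--Kolmogorov compactness criterion in the Bochner space $\L^2((0,T);B)$. Concretely, I take a bounded sequence $\{u_n\} \subset W$ and aim to extract a subsequence that is Cauchy in $\L^2((0,T);B)$; by the standing bounds, it suffices to do this working with the two quantities $\|u_n\|_{\L^2(B_0)}$ and $\|\partial_t u_n\|_{\L^2(B_1)}$, both of which are uniformly controlled by assumption.

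The first step is Ehrling's lemma: for every $\eps > 0$ there exists $C_\eps > 0$ such that
\begin{equation*}
    \|v\|_B \leq \eps \|v\|_{B_0} + C_\eps \|v\|_{B_1} \quad \text{for all } v \in B_0.
\end{equation*}
I would prove this by contradiction: otherwise there exists $\eps_0 > 0$ and a sequence $\{v_k\} \subset B_0$ with $\|v_k\|_B = 1$ and $\|v_k\|_{B_0} \leq 1/\eps_0$, yet $\|v_k\|_{B_1} \to 0$. By compactness of $B_0 \embed B$ a subsequence converges in $B$ to some $v$ with $\|v\|_B = 1$, but the continuous embedding $B \embed B_1$ forces $v = 0$, a contradiction. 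Integrating this pointwise-in-$t$ inequality yields the Bochner-space version controlling $\|u_n - u_m\|_{\L^2(B)}$ by $\eps \|u_n - u_m\|_{\L^2(B_0)} + C_\eps \|u_n - u_m\|_{\L^2(B_1)}$, so the first term is uniformly small once $\eps$ is small, and the task reduces to Cauchyness in $\L^2((0,T);B_1)$.

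To obtain the latter, I would appeal to the Riesz--Fréchet--Kolmogorov characterization of compactness in $\L^2((0,T);B_1)$: one needs uniform $\L^2$-bounds (already in hand, since $B_0 \embed B_1$) plus equicontinuity of time translations, namely
\begin{equation*}
    \sup_n \int_0^{T-s} \|u_n(t+s) - u_n(t)\|_{B_1}^2\,dt \longrightarrow 0 \quad \text{as } s \to 0^+.
\end{equation*}
This translation estimate is the main obstacle, and it follows from writing $u_n(t+s) - u_n(t) = \int_t^{t+s} \partial_t u_n(\tau)\,d\tau$, applying Jensen and Fubini, and using the uniform bound on $\|\partial_t u_n\|_{\L^2((0,T);B_1)}$; combined with the truncation of the integrand near the endpoints of $(0,T)$ this yields an $\bigo{s}$ decay. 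Finally, I would combine the Ehrling reduction with this translation control: given $\eta > 0$, choose $\eps$ so that the Ehrling term contributes at most $\eta/2$ uniformly, then extract via Riesz--Fréchet--Kolmogorov a subsequence that is Cauchy in $\L^2((0,T);B_1)$ within tolerance $\eta/(2C_\eps)$, producing a Cauchy subsequence in $\L^2((0,T);B)$ and hence the claimed compact embedding.
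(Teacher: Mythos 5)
The paper itself offers no proof of this lemma (it is a classical result, cited from Aubin's original paper), so the only question is whether your argument stands on its own. The Ehrling step is fine: your compactness--contradiction proof of $\norm{v}_B \leq \eps\norm{v}_{B_0} + C_\eps\norm{v}_{B_1}$ is correct, and integrating it in time correctly reduces the problem to relative compactness of the bounded-in-$W$ sequence in $\L^2((0,T);B_1)$.

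The gap is in the second half. The Riesz--Fr\'echet--Kolmogorov criterion you invoke --- uniform $\L^2$ bound plus equicontinuity of time translations --- is \emph{not} a valid compactness criterion in the vector-valued space $\L^2((0,T);B_1)$ when $B_1$ is infinite dimensional: a sequence of constant-in-time functions $u_n(t)\equiv e_n$ with $(e_n)$ orthonormal in $B_1$ is bounded, has zero time derivative and hence perfect translation equicontinuity, yet has no convergent subsequence. Translation estimates only give compactness in the time direction; the vector-valued Kolmogorov--Riesz/Simon theorem additionally requires relative compactness in $B_1$ of the sets of time averages $\left\{\int_{t_1}^{t_2}u_n(t)\,dt\right\}_n$ for $0<t_1<t_2<T$ (J.~Simon's criterion). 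In your setting this extra condition does hold --- the averages are bounded in $B_0$, and $B_0\embed B\embed B_1$ with the first embedding compact makes $B_0\embed B_1$ compact --- but you never verify it, and indeed you use the compact embedding only inside Ehrling's lemma, whereas it must enter a second time precisely here. So either add Simon's condition and check it via the $B_0$-boundedness of averages, or follow the classical Lions route: pass to a weakly convergent subsequence (weak limit $0$ without loss of generality), show $u_n(t)\to 0$ in $B_1$ for each $t$ by writing $u_n(t)$ as a small-interval time average (compact in $B_1$ by the above, tending weakly to $0$, hence strongly) plus an error of order $\sqrt{s}$ controlled by $\norm{\partial_t u_n}_{\L^2((0,T);B_1)}$, and conclude with dominated convergence using the uniform bound on $\sup_t\norm{u_n(t)}_{B_1}$. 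With that repair (plus the routine diagonal extraction over $\eps_k\to 0$ at the end), your Ehrling-based reduction yields the stated compact embedding.
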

We refer the reader to \cite{aubin1963} for the original proof of the statement.
 \section{Well-posedness}
\label{sec:wellposedness}
In this section we prove that ($P_1$) has a weak solution by approximating it with a Galerkin projection.
We show the projection exists and is unique, and proceed by proving it converges to the weak solution of ($P_1$).
First, we introduce the necessary tools for defining the Galerkin approximation.

We use one mesh partition for each of the two spatial scales.
Let $\mathbb{P}^k$ be the space of polynomials up to degree $k$.
Let $\B_H$ be a mesh partition for $\Omega$ consisting of simplices. We denote the diameter of an element $B \in \B_H$ with $H_B$, and the global mesh size with $H:= \max_{B \in \B_H} H_B$.
We introduce a similar mesh partition $\K_h$ for $Y$ with global mesh size $h:= \max_{K \in \K_h} h_K$.

Our macroscopic and microscopic finite element spaces $V_H$ and $W_h$ are, respectively:
\begin{align*}
    V_H &:= \left\{ \left. v \in \mathbf{C}(\bar{\Omega})\right|\,v|_B \in \mathbb{P}^1(B) \mbox{ for all } B \in \B_H,\, v=0 \mbox{ on } \partial \Omega  \right\},\\
        W_h &:= \left\{ \left. w \in \mathbf{C}(\bar{Y})\right|\,w|_K \in \mathbb{P}^1(K) \mbox{ for all } K \in \K_h  \right\}.
\end{align*}
We note that this approach is easily extensible to $x$-dependent microscopic domains.
Let $\mathcal{N}_1$ and $\mathcal{N}_2$ denote the sets of degrees of freedom in $\B_H$ and $\K_h$, respectively.
Let $ \operatorname{span}\left( \xi_i \right) = V_H$ and $ \operatorname{span}\left( \eta_k \right) = W_h$, and let $\alpha_i,\beta_{ik} : S \to \R$ denote the Galerkin projection coefficient for the $i$th and $ik$th degree of freedom, respectively. We introduce the following finite-dimensional Galerkin approximations of the functions $\pi$ and $\rho$:
\begin{equation}
    \begin{split}
        \pi^H(t,x) &:= \sum_{i\in \mathcal{N}_1} \alpha_i(t) \xi_i(x),\\
        \rho^{H,h}(t,x,y) &:= \sum_{i\in \mathcal{N}_1, k\in \mathcal{N}_2} \beta_{ik}(t) \xi_i(x) \eta_k(y).
    \end{split}
    \label{eq:trunc}
\end{equation}

Reducing the space of test functions to $V^H$ and $W^h$, we obtain the following discrete weak formulation: find a solution pair $(\pi^H,\rho^{H,h}) \in  L^2(S;V^H)\times  L^2(S;V^H\times W^h)$ where $\partial_t \rho^{H,h} \in  L^2(S;V^H\times W^h)$ that solve
\begin{equation}
    \label{eq:weak_pi}
    A\int_\Omega\nabla_x\pi^H\cdot\nabla_x\varphi dx=\int_\Omega f(\pi^H,g(\rho^{H,h}))\varphi dx,
\end{equation}
and
\begin{equation}
    \label{eq:weak_rho}
    \begin{split}
    &\int_\Omega\int_Y\partial_t\rho^{H,h}\psi dydx+D\int_\Omega \int_Y\nabla_y \rho^{H,h}\cdot\nabla_y\psi dydx\\
    &\quad= \kappa\int_\Omega\int_{\Gamma_R}(\pi^H+p_F-R\rho^{H,h})\psi d\sigma_ydx,
    \end{split}
\end{equation}
for any $\varphi \in V_H$ and $\psi \in V_H \times W_h$ and almost every $t \in S$. Furthermore, $\pi^H(0,x)=\pi_I^H(x)$ (see \eqref{eq:init_pi} and $\rho^{H,h}(0,x,y)=\rho^{H,h}_I(x,y)$ where
$(\pi_I^H,\rho_I^{H,h})\in V_H\times (V_H\times W_h)$ is a Galerkin approximation of $(\pi_I,\rho_I)$.
These concepts lead us to the first theorem.
\ \\
\begin{theorem}[Existence and uniqueness of the Galerkin approximation]
    \label{thm:ex_un}
    There exists a unique solution $(\pi^H,\rho^{H,h})$ to the system in \eqref{eq:weak_pi}-\eqref{eq:weak_rho}.
\end{theorem}
\begin{proof}
    The proof is divided in three steps. In step 1, the local existence in time is proven. In step 2, global existence in time is proven. Step 3 is concerned with proving the uniqueness of the system.

    \textit{Step 1: local existence of solutions to \eqref{eq:weak_pi} - \eqref{eq:weak_rho}}:
    By substituting $\varphi = \xi_i$ and $\psi = \xi_i\eta_k$ for $i\in \mathcal{N}_1$ and $k\in \mathcal{N}_2$ in \eqref{eq:weak_pi}-\eqref{eq:weak_rho} we obtain the following system of ordinary differential equations coupled with algebraic equations: find $(\alpha, \beta) \in \mathbf{C}(S) \times \mathbf{C}^1(S)$ such that
    \begin{align}
        &\sum_{j\in \mathcal{N}_1} P_{ij} \alpha_j(t) = F_i(\alpha(t),\beta(t))\label{eq:ode_alpha} \mbox{ for }i\in \mathcal{N}_1,\\
        &\sum_{j\in \mathcal{N}_1,l\in \mathcal{N}_2} M_{ijkl} \beta^\prime_{ik}(t) + \sum_{j\in \mathcal{N}_1,l\in \mathcal{N}_2} Q_{ijkl} \beta_{jl}(t) \\
        &= c_{ik} + \sum_{j\in \mathcal{N}_1} E_{ijk}\alpha_j(t)\label{eq:ode_beta}\mbox{ for }i\in \mathcal{N}_1\mbox{ and }k \in \mathcal{N}_2,
    \end{align}
    with
    \begin{equation}
        \begin{split}
            P_{ij} &:= A \int_\Omega \nabla_x \xi_i \cdot \nabla_x \xi_j\,dx,\\
            F_i &:= \int_\Omega f \left( \sum_{j\in \mathcal{N}_1}\alpha_j(t) \xi_j,\sum_{j\in \mathcal{N}_1,l\in \mathcal{N}_2}\beta_{jl}(t)\xi_j\eta_l \right)\xi_i\,dx,\\
            M_{ijkl} &:= \int_\Omega \xi_i\xi_jdx\int_{Y}\eta_k \eta_l dy,\\
            Q_{ijkl} &:= D \int_\Omega \xi_i\xi_jdx\int_{Y} \nabla_y \eta_k \cdot \nabla_y \eta_ldy + \kappa R\int_\Omega \xi_i\xi_j dx\int_{\Gamma_R}\eta_k \eta_ld\sigma_y,\\
            E_{ijk} &:= \kappa\int_\Omega\xi_i\xi_j dx\int_{\Gamma_R} \eta_kd\sigma_y,\\
            c_{ik}&:= \kappa p_F\int_\Omega \xi_i dx\int_{\Gamma_R} \eta_k d\sigma_y\,
        \end{split}
        \label{eq:matrices}
    \end{equation}
    Applying \eqref{eq:main_initial} to \eqref{eq:weak_pi} and \eqref{eq:weak_rho} yields:
    \begin{equation}
        \begin{split}
            \alpha_i(0) &= \int_\Omega \xi_i\pi_I\,dx,\\
            \beta_{ik}(0) &= \int_\Omega\int_Y\xi_i\eta_k\rho_Idydx.
        \end{split}
        \label{eq:fem_init}
    \end{equation}

    For all $t \in S$, the coefficients $\alpha_i(t), \beta_{ik}(t)$ of \eqref{eq:trunc} are determined by \eqref{eq:ode_alpha}, \eqref{eq:ode_beta} and \eqref{eq:fem_init}.

    Since the system of ordinary differential equations in \eqref{eq:ode_beta} is linear, we are able to explicitly formulate the solution representation for $\beta_{ik}$ with respect to $\alpha_i$. Let $\alpha_i$ be given, and let $Q$ and $E$ denote matrices given by:
    \begin{align}
        Q\beta &= \sum_{j\in \mathcal{N}_1,l\in \mathcal{N}_2} Q_{ijkl} \beta_{jl},\\
        M\beta &= \sum_{j\in \mathcal{N}_1,l\in \mathcal{N}_2} M_{ijkl} \beta_{jl},\\
        E\alpha &= \sum_{j\in \mathcal{N}_1} E_{ijk}\alpha_j.
    \end{align}
    Then $\beta_{ik}$ can be expressed as
    \begin{equation}
        \beta_{ik}(t) = M^{-1}(\beta_{ik}(0)e^{-Qt} + Q^{-1}(c+E\alpha_i(t))(I - e^{-Qt})).
        \label{eq:matrix_exponential}
    \end{equation}
    \label{lem:b_from_a}
    Substituting \eqref{eq:matrix_exponential} in \eqref{eq:ode_beta} results in the expression:
    \begin{equation}
        \begin{split}
            &(c+E\alpha_j(t))e^{-Qt} + M^{-1}\left(Q\beta_{ik}(0)e^{-Qt} + (c+E\alpha_j(t))(I-e^{-Qt})\right) \\
             &= Q\beta_{ik}(0)e^{-Qt} +c+E\alpha_j(t).
        \end{split}
    \end{equation}
    \ref{as:rhs} implies that for all $i \in \mathcal{N}_1$, $F_i$ are contractions.
    Let $(\alpha^*,\beta^*) = \left((\alpha^*_i)_i, (\beta^*_{ik})_{ik}\right)$ and $(\alpha^{**},\beta^{**}) = \left((\alpha^{**}_i)_i, (\beta^{**}_{ik})_{ik}\right)$ be two function pairs that satisfy \eqref{eq:matrix_exponential}.
    Then it holds that
    \begin{equation}
      \begin{split}
          &|F_i(\alpha^{*},\beta^{*}) - F_i(\alpha^{**},\beta^{**})|,\\
          &\leq |F_i(\alpha^{*},\beta^{*}) - F_i(\alpha^{*},\beta^{**}) + F_i(\alpha^{*},\beta^{**}) - F_i(\alpha^{**},\beta^{**})|, \\
          &= \left|\int_\Omega f \left( \sum_{j\in \mathcal{N}_1}\alpha^{*}_j(t) \xi_j,\sum_{j\in \mathcal{N}_1,l\in \mathcal{N}_2} \beta^{*}_{jl}(t) \xi_j\eta_l \right)\xi_i\right.\\
          &\quad- \left. f \left( \sum_{j\in \mathcal{N}_1}\alpha^{*}_j(t) \xi_j,\sum_{j\in \mathcal{N}_1,l\in \mathcal{N}_2}\beta^{**}_{jl}(t)\xi_j\eta_l \right)\xi_i\,dx\right|\\
          &+ \left|\int_\Omega f \left( \sum_{j\in \mathcal{N}_1}\alpha^{*}_j(t) \xi_j,\sum_{j\in \mathcal{N}_1,l\in \mathcal{N}_2}\beta^{**}_{jl}(t)\xi_j\eta_l \right)\xi_i\right.\\
          &\quad- \left. f \left( \sum_{j\in \mathcal{N}_1}\alpha^{**}_j(t) \xi_j,\sum_{j\in \mathcal{N}_1,l\in \mathcal{N}_2}\beta^{**}_{jl}(t)\xi_j\eta_l \right)\xi_i\,dx\right|,\\
          &\leq \left|\int_\Omega c_\rho \sum_{j\in \mathcal{N}_1,l\in \mathcal{N}_2}(\beta^{*}_{jl}(t) - \beta^{**}_{jl}(t))\xi_j\eta_l
          + c_\pi \sum_{j\in \mathcal{N}_1}(\alpha^{*}_j(t) - \alpha^{**}_j(t)) \xi_j \,dx\right|,\\
          &\leq c_\beta \left|\sum_{j\in \mathcal{N}_1}\beta^{*}_{jl}(t) - \beta^{**}_{jl}(t)\right| +  c_\alpha \left|\sum_{j\in \mathcal{N}_1}\alpha^{*}_j(t) - \alpha^{**}_j(t)\right|,
      \end{split}
      \label{eq:lipschitz_carry}
    \end{equation}
    with $c_\alpha, c_\beta$ defined as
    \begin{align}
        c_\alpha:= c_\pi\max_{j\in N_1}\int_\Omega \xi_j\,dx\leq c_\pi,& &c_\beta := c_\rho\max_{j\in \mathcal{N}_1,l\in \mathcal{N}_2}\int_\Omega \xi_j\eta_ldx\leq c_\rho.
    \end{align}
    Now, we derive a time-dependent continuity estimate for sufficiently small $t$. Again picking two pairs $\left(\alpha^*(t),\beta^*(t)\right)$ and $\left(\alpha^{**}(t),\beta^{**}(t)\right)$ (not necessarily the same as in \eqref{eq:lipschitz_carry}):
    \begin{equation}
        \begin{split}
            ||\beta^*(t) - \beta^{**}(t)|| &= ||I-e^{Qt}||\cdot||M^{-1}Q^{-1}E||\cdot||\alpha^*(t) - \alpha^{**}(t)||,\\
            & = ||Qt + \bigo{t^2}||\cdot||M^{-1}Q^{-1}E||\cdot||\alpha^*(t) - \alpha^{**}(t)||,\\
            &\leq tC||\alpha^* - \alpha^{**}|| \mbox{ for small $t$}.
        \end{split}
        \label{eq:time_lp}
    \end{equation}
    Here, the size of valid $t$ is independent of the initial data.
    Using \eqref{eq:time_lp} we obtain a Lipschitz bound on all $F_i$ in the interval $[0,\tau]$ for any choice of $\tau < t$:
    \begin{equation}
      \begin{split}
          &||F_i(\alpha^*(t),\beta^*(t)) - F_i(\alpha^{**}(t),\beta^{**}(t))||\\ &\leq ||F_i(\alpha^*(t),\beta^*(t)) - F_i(\alpha^*(t),\beta^{**}(t))|| + ||F_i(\alpha^*(t),\beta^{**}(t)) - F_i(\alpha^{**}(t),\beta^{**}(t))||,\\
          &\leq c_\alpha ||\alpha^*(t) - \alpha^{**}(t)|| + c_\beta || \beta^*(t) - \beta^{**}(t)||,\\
          &\leq \left(c_\alpha + c_\beta C\tau  \right) ||\alpha^*(t) - \alpha^{**}(t)||.
      \end{split}
    \end{equation}

    Choosing $\tau$ small enough to satisfy $c_\alpha + c_\beta C\tau<1$ makes $F$ a contraction on $[0,\tau]$.
    By Banach's fixed point theorem, it follows that the equation $F(\alpha(t),\beta(t)) = \alpha(t)$ has a solution for $\alpha$ in $ L^2(S)$.
    Substitution of $\alpha(t)$ into \eqref{eq:matrix_exponential} leads to the corresponding $\beta$.
    Existence of $\pi^H$ and $\rho^{H,h}$ follows directly.

    \textit{Step 2: global existence of solutions to \eqref{eq:weak_pi} - \eqref{eq:weak_rho}}:
    We cover time interval $S$ into $N$ intervals of length at most $\tau$ such that $S \subseteq \bigcup_n((n-1)\tau,n\tau]$.
    From the arguments in the previous paragraph it is clear a solution exists on the first interval $[0,\tau]$. This allows us to provide an induction argument for the existence of a solution on interval $n$:

    Given that interval $n$ has local solution $\beta \left( \left[ (n-1)\tau,n\tau \right] \right)$, we can obtain values $\beta(n\tau)$, $\beta^\prime(n\tau)$, $\alpha(n\tau)$ as initial values to the local system on interval $n+1$, and show existence of a solution. This way, we are able to construct a solution satisfying \eqref{eq:weak_pi} - \eqref{eq:weak_rho} everywhere on $S$.

    \textit{Step 3: uniqueness of solutions to \eqref{eq:weak_pi} - \eqref{eq:weak_rho}}:
    We decouple the system and use a fixed point argument to show that this system has a globally unique solution in time.

    Let $(\alpha^*, \beta^*)$ and $(\alpha^{**},\beta^{**})$ be two solutions satisfying \eqref{eq:weak_pi} - \eqref{eq:weak_rho} with the same initial data.
    Let $\bar{\beta}(t) := \beta^*(t) - \beta^{**}(t)$ and $\bar{\alpha}(t) := \alpha^*(t) - \alpha^{**}(t)$.
    By starting from \eqref{eq:ode_beta} and multiplying both equations with $\bar{\beta}(t)$, we obtain
    \begin{equation}
        \begin{split}
            \langle M\bar{\beta}(t),\bar{\beta}^\prime(t)\rangle &= \langle Q\bar{\beta}(t),\bar{\beta}(t)\rangle + \langle E\bar{\alpha}(t),\bar{\beta}(t)\rangle,\\
            \frac{1}{2}\frac{d}{dt} \norm{\bar{\beta}(t)}^2 &\leq \norm{M^{-1}Q}\norm{\bar{\beta}(t)}^2 + \norm{M^{-1}E}\norm{ \bar{\alpha}(t)}\ \norm{\bar{\beta}(t)}.
        \end{split}
    \end{equation}
    Since $\bar{\beta}(0)=0$, by applying Gr\"onwall's differential inequality, we know that $\bar\beta(t)\equiv0$. Combined with \eqref{eq:matrix_exponential}, it immediately follows that $\bar\alpha(t)\equiv0$, and therefore, $(\alpha^*, \beta^*)=(\alpha^{**},\beta^{**})$.

\end{proof}
Note that showing the stability of the finite element approximation with respect to data and initial conditions follows an analogous argument. The proof is omitted here.

The remaining part of this section is devoted to proving that the system in \eqref{eq:weak_pi}-\eqref{eq:weak_rho} converges to the solution of the Galerkin projection converges to the weak solution of ($P_1$).
Our first aim is to derive standard energy estimates for the discrete solution $(\pi^H,\rho^{H,h})$.
\begin{lemma}[Standard energy estimates]
\label{EnergyEstimates}
Let $(\pi^H,\rho^{H,h})$ be a solution to \eqref{eq:weak_pi}-\eqref{eq:weak_rho}. 
Then we have the following energy estimates
\begin{equation}
    \label{eq:energyPI}
    \|\pi^H\|_{ L^2(S;\H^1_0(\Omega))}\le C
\end{equation}
and
\begin{equation}
    \label{eq:energyRHO}
    \|\rho^{H,h}\|_{ L^2(S; L^2(\Omega; \H^1(Y)))}\le C
\end{equation}
and
\begin{equation}
    \label{eq:energyDtRHO}
    \|\partial_t\rho^{H,h}\|_{ L^2(S\times\Omega\times Y)}\le C
\end{equation}
where $C$ is independent on $h$ and $H$, while it depends 
on the model parameters and the geometry of the domains.
\end{lemma}
\begin{proof}
    Testing \eqref{eq:weak_pi} with $\varphi = \pi^H$ and \eqref{eq:weak_rho} with $\psi = \rho^{H,h}$ yields identities
	\begin{equation}
	      \label{eq:energy_pi}A \left|\left| \nabla_x \pi^H \right|\right|^2_{ L^2(\Omega)} = \int_\Omega f(\pi^H,g(\rho^{H,h}))\pi^H\,dx,
	\end{equation}
	and
	\begin{equation}
	    \begin{split}
	        &\label{eq:energy_rho}\frac{1}{2} \frac{d}{dt} \left|\left|\rho^{H,h}\right|\right|^2_{ L^2(\Omega\times Y)}
	      + D \left|\left|\nabla_y \rho^{H,h}\right|\right|^2_{ L^2(\Omega\times Y)}\\
	      &\quad = \int_\Omega \int_{\Gamma_R} \kappa(\pi^H + p_F)\rho^{H,h}\,d\sigma_y \,dx - \kappa R \left|\left| \rho^{H,h}\right|\right|^2_{ L^2(\Omega \times\Gamma_R)}.
	    \end{split}
	\end{equation}
	We consider first (\ref{eq:energy_pi}). By \ref{as:rhs}, Hölder's inequality and Poincar\'e's inequality we have
	\begin{eqnarray}
	\nonumber
	    A\|\nabla_x\pi^H\|^2_{ L^2(\Omega)}&=&\int_\Omega f(\pi^H,g(\rho^{H,h}))\pi^H dx\le \int_\Omega|f(\pi^H,g(\rho^{H,h}))||\pi^H|dx\\
	    \nonumber
	    &\le& C_f\int_\Omega|\pi^H|\min(|\pi^H|,|\pi^H|^\alpha)dx\le
	    C_f\int_\Omega|\pi^H|^{1+\alpha}dx\\
	    \nonumber
	    &\le&C_f|\Omega|^{(1-\alpha)/2}\|\pi^H\|^{1+\alpha}_{ L^2(\Omega)}\\
	    \nonumber
	    &\le& C_f|\Omega|^{(1-\alpha)/2}\mathcal{C}_\Omega^{1+\alpha}\|\nabla_x\pi^H\|^{1+\alpha}_{ L^2(\Omega)}
	\end{eqnarray}
	Consequently,
	$$
	\|\nabla_x\pi^H\|_{ L^2(\Omega)}\le\left(\frac{C_f}{A}\right)^{1/(1-\alpha)}\sqrt{|\Omega|}\mathcal{C}^{(1+\alpha)/(1-\alpha)}
	$$
	and by Poincar\'e's inequality 
	$$
	\|\pi^H\|_{ L^2(\Omega)}\le\left(\frac{C_f}{A}\right)^{1/(1-\alpha)}\sqrt{|\Omega|}\mathcal{C}^{2/(1-\alpha)}
	$$
	Integrating over $(0,T)$ proves (\ref{eq:energyPI}).

We proceed with (\ref{eq:energy_rho}). Using Cauchy-Schwarz' inequality and \eqref{eq:trace_ineq} to the right-hand side of \eqref{eq:energy_rho} 
	\begin{equation}
	    \begin{split}
	      \int_\Omega \int_{\Gamma_R} \kappa(\pi^H + p_F)\rho^{H,h}\,d\sigma_y \,dx
	      &\leq \kappa|\Gamma_R|\left(\|\pi^H\|_{ L^2(\Omega)}+p_F|\Omega|\right)\|\rho^{H,h}\|_{ L^2(\Omega\times\Gamma_R)}.\\
	      &\leq \kappa c_E|\Gamma_R|\left(\|\pi^H\|_{ L^2(\Omega)} + p_F|\Omega|\right)\|\rho^{H,h}\|_{ L^2(\Omega;\H^1(Y))}.
	  \end{split}
	    \label{eq:first_est}
	\end{equation}
	Then, we add to both sides of \eqref{eq:energy_rho} a term $D||\rho^{H,h}||^2_{ L^2(\Omega\times Y)}$ to get
	\begin{equation}
	  \begin{split}
	      &\frac{1}{2}\frac{d}{dt}\|\rho^{H,h}\|^2_{ L^2(\Omega\times Y)} + D\|\rho^{H,h}\|^2_{  L^2(\Omega;\H^1(Y))},\\
      &\leq D\|\rho^{H,h}\|^2_{ L^2(\Omega\times Y)} + \kappa c_E|\Gamma_R|\left(\|\pi^H\|_{ L^2(\Omega)} + p_F|\Omega|\right) ||\rho^{H,h}||_{ L^2(\Omega;\H^1(Y))}.
	  \end{split}
	\end{equation}
	After applying Young's inequality with the small parameter $\varepsilon>0$, we get
	\begin{equation}
	  \begin{split}
	      &\frac{1}{2}\frac{d}{dt}||\rho^{H,h}||^2_{ L^2(\Omega\times Y)} + (D-\varepsilon) ||\rho^{H,h}||^2_{ L^2(\Omega;H^1(Y))},\\
	      &\leq D||\rho^{H,h}||^2_{ L^2(\Omega\times Y)} + \kappa^2c_E^2c_\varepsilon |\Gamma_R|^2\left( ||\pi^H||^2_{ L^2(\Omega)} + p_F^2|\Omega|^2 \right).
	  \end{split}
	\end{equation}
	By applying Gr\"onwall's inequality we obtain the desired estimates:
	\begin{align}
	    ||\rho^{H,h}||^2_{ L^2(\Omega\times Y)} &\leq C_\rho e^{Dt},\\
	    ||\nabla_y\rho^{H,h}||^2_{ L^2(\Omega\times Y)} &\leq C_\rho + \varepsilon||\rho^{H,h}||_{ L^2(\Omega\times Y)},
	\end{align}
	with
	\[C_\rho = \kappa^2c_E^2c_\varepsilon |\Gamma_R|^2\left( ||\pi^H||^2_{ L^2(\Omega)} + p_F^2|\Omega|^2 \right).\]

Finally, testing (\ref{eq:weak_rho}) with $\partial_t\rho^{H,h}\in V_H\times W_h$, we obtain
\begin{equation}
\nonumber
\begin{split}
&\|\partial_t\rho^{H,h}\|^2_{ L^2(\Omega\times Y)}+\frac{D}{2}\frac{d}{dt}\|\nabla_y\rho^{H,h}\|^2_{ L^2(\Omega\times Y)}\\
&\le\kappa\int_\Omega\int_{\Gamma_R}(\pi^H+p_F-R\rho^{H,h})\partial_t\rho^{H,h}d\sigma_ydx\\
&\le\kappa|\Gamma_R|\left(|\Omega|p_F+|\Omega|^{1/2}\|\pi^H\|_{L^2(\Omega)}\right)-\frac{R\kappa}{2}\frac{d}{dt}\|\rho^{H,h}\|^2_{ L^2(\Omega\times\Gamma_R)}
\end{split}
\end{equation}
Set 
$$
\Theta(t)=\frac{D}{2}\|\nabla_y\rho^{H,h}\|^2_{ L^2(\Omega\times Y)}+\frac{R\kappa}{2}\|\rho^{H,h}\|^2_{ L^2(\Omega\times\Gamma_R)}
$$
then we have
$$
\|\partial_t\rho^{H,h}\|^2_{ L^2(\Omega\times Y)}+\Theta'(t)\le C
$$
Integrating over $[0,T]$ we
have
\begin{equation}
    \label{eq:rhoTime}
    \|\partial_t\rho^{H,h}\|^2_{ L^2(S\times\Omega\times Y)}\le CT+\Theta(0)-\Theta(T)\le CT+\Theta(0)\le C'
\end{equation}
where $C'$ is independent of $H,h$ and $\rho^{H,h}$.
\end{proof}We shall need a bound for $\partial_t\pi^H$ and also an estimate for $\partial_t\rho^{H,h}$ that is sharper than (\ref{eq:energyDtRHO}). 
\begin{lemma}
\label{timeIncreasedRegularity}
Let $(\pi^H,\rho^{H,h})$ be the solution to (\ref{eq:weak_pi})-(\ref{eq:weak_rho}) for $H,h>0$. Then
\begin{equation}
\|\partial_t \pi^H\|_{ L^2(S;\H^1_0(\Omega))}\le C,
\label{eq:piTime}
\end{equation}
\begin{equation}
\|\partial_t \rho^{H,h}\|_{ L^2(S, L^2(\Omega;\H^1(Y)))}\le C.
\label{eq:rhoTimeH1}
\end{equation}
\end{lemma}
\begin{proof}
Differentiate (\ref{eq:weak_pi})-(\ref{eq:weak_rho}) with respect to $t$. Let $t\in S$ be fixed but arbitrary, then $\partial_t\pi^H\in V_H$ and $\partial_t\rho^{H,h}\in V_H\times W_h$. Testing the differentiated equations with $\partial_t\pi^H$ and $\partial_t\rho^{H,h}$ respectively yield
\begin{eqnarray}
    \label{eq:temporal1}
    \|\nabla_x(\partial_t\pi^H)\|^2_{ L^2(\Omega)}=\int_\Omega(D_1f\partial_t\pi^H+D_2fg(\partial_t\rho^{H,h}))\partial_t\pi^H dx\\
    \label{eq:temporal2}
    \frac{1}{2}\frac{d}{dt}\|\partial_t\rho^{H,h}\|^2_{ L^2(\Omega\times Y)}+D\|\nabla_y(\partial_t\rho^{H,h})\|^2_{ L^2(\Omega\times Y)}=\\
    \nonumber
    =\kappa\int_\Omega\int_{\Gamma_R}\partial_t\pi^H\partial_t\rho^{H,h}d\sigma_ydx-\kappa R\|\partial_t\rho^{H,h}\|^2_{ L^2(\Omega\times\Gamma_R)}
\end{eqnarray}
Consider (\ref{eq:temporal1}). By \ref{as:rhs} and Poincar\'e's inequality 
\begin{eqnarray}
\nonumber
 \|\nabla_x(\partial_t\pi^H)\|^2_{ L^2(\Omega)}&\le&\int_\Omega\theta|\partial_t\pi^H|^2+\theta|\partial_t\pi^Hg(\partial_t\rho^{H,h})|dx\\
 \nonumber
 &\le&\theta\mathcal{C}_\Omega\|\nabla_x(\partial_t\pi^H)\|^2_{ L^2(\Omega)}+\theta\int_\Omega|\partial_t\pi^H||g(\partial_t\rho^{H,h})|dx
\end{eqnarray}
By \ref{as:rhs}, we have $\theta\mathcal{C}_\Omega<1$, hence
$$
(1-\theta\mathcal{C}_\Omega)\|\nabla_x(\partial_t\pi^H)\|^2_{ L^2(\Omega)}\le \theta\int_\Omega|\partial_t\pi^Hg(\partial_t\rho^{H,h})|dx
$$
By the Cauchy-Schwartz inequality and \ref{as:g-functional}, we get
\begin{eqnarray}
\nonumber
(1-\theta\mathcal{C}_\Omega)\|\nabla_x(\partial_t\pi^H)\|^2_{ L^2(\Omega)}&\le&\theta\|\partial_t\pi^H\|_{ L^2(\Omega)}\|g(\partial_t\rho^{H,h})\|_{ L^2(\Omega)}\\
\nonumber
&\le&\theta C_g\|\partial_t\pi^H\|_{ L^2(\Omega)}\|\partial_t\rho^{H,h}\|_{ L^2(\Omega,\H^1(Y))}\\
\nonumber
&\le&\theta C_g\mathcal{C}_\Omega\|\nabla_x(\partial_t\pi^H)\|_{ L^2(\Omega)}\|\partial_t\rho^{H,h}\|_{ L^2(\Omega,\H^1(Y))}
\end{eqnarray}
whence
\begin{equation}
\label{eq:temporal3}
\|\nabla_x(\partial_t\pi^H)\|_{ L^2(\Omega)}\le \frac{C_g\theta\mathcal{C}_\Omega}{1-\theta\mathcal{C}_\Omega}\|\partial_t\rho^{H,h}\|_{ L^2(\Omega,\H^1(Y))}.
\end{equation}
We proceed with (\ref{eq:temporal2}). By applying Young's inequality with parameter $\epsilon/\mathcal{C}_\Omega$, we get
\begin{align}
\nonumber
&\frac{1}{2}\frac{d}{dt}\|\partial_t\rho^{H,h}\|^2_{ L^2(\Omega\times Y)}+D\|\nabla_y(\partial_t\rho^{H,h})\|^2_{ L^2(\Omega\times Y)}\le\\
\nonumber
&\le\frac{\epsilon}{\mathcal{C}_\Omega}\|\partial_t\pi^H\|^2_{ L^2(\Omega)}+\left(\frac{4\kappa}{\epsilon}-\kappa R\right)\|\partial_t\rho^{H,h}\|^2_{ L^2(\Omega\times\Gamma_R)}\le\\
\nonumber
&\le\epsilon\|\nabla_x(\partial_t\pi^H)\|^2_{ L^2(\Omega)}+\left(\frac{4\kappa}{\epsilon}-\kappa R\right)\|\partial_t\rho^{H,h}\|^2_{ L^2(\Omega\times\Gamma_R)}
\end{align}
Adding $D\|\partial_t\rho^{H,h}\|^2_{ L^2(\Omega\times Y)}$ to both sides of the above inequality and choosing $\epsilon$ small enough to ensure
$$
\epsilon\left(\frac{\theta C_g\mathcal{C}_\Omega}{1-\theta\mathcal{C}_\Omega}\right)^2<\frac{D}{2},
$$
we obtain
$$
\frac{1}{2}\frac{d}{dt}\|\partial_t\rho^{H,h}\|^2_{ L^2(\Omega\times Y)}+\frac{D}{2}\|\partial_t\rho^{H,h}\|^2_{ L^2(\Omega;\H^1(Y))}\le C\|\partial_t\rho^{H,h}\|^2_{ L^2(\Omega\times\Gamma_R)}+D\|\partial_t\rho^{H,h}\|^2_{ L^2(\Omega\times Y)}
$$
By using the interpolation-trace inequality with a suitable $\epsilon$, we get
$$
C\|\partial_t\rho^{H,h}\|^2_{ L^2(\Omega\times\Gamma_R)}\le\frac{D}{4}\|\nabla_y(\partial_t\rho^{H,h})\|^2_{ L^2(\Omega\times Y)}+C\|\partial_t\rho^{H,h}\|^2_{ L^2(\Omega\times Y)}
$$
which yields
$$
\frac{1}{2}\frac{d}{dt}\|\partial_t\rho^{H,h}\|^2_{ L^2(\Omega\times Y)}+\frac{D}{4}\|\partial_t\rho^{H,h}\|^2_{ L^2(\Omega;\H^1(Y))}\le C\|\partial_t\rho^{H,h}\|^2_{ L^2(\Omega\times Y)}.
$$
Integrating over $S$ and using (\ref{eq:energyDtRHO}), we obtain (\ref{eq:rhoTimeH1}) and (\ref{eq:piTime}) follows from (\ref{eq:temporal3}) and Poincar\'e's inequality.
\end{proof}
\begin{proposition}
\label{weakConvergenceProp}
Let $(\pi^H,\rho^{H,h})\in L^2(S;V_H)\times L^2(S;V_H\times W_h)$ be solutions to \eqref{eq:weak_pi}-\eqref{eq:weak_rho} for each $H,h>0$. Then there exist functions 
\begin{equation}
\pi\in L^2(S;\H^1_0(\Omega))\quad{\rm and}\quad\rho\in L^2(S; L^2(\Omega;\H^1(Y)))\cap\H^1(S; L^2(\Omega\times Y))
\end{equation}
and subsequences $H_j$ and $h_j~~(j\in\mathbb{N})$ such that when $j\rightarrow\infty$ we have
\begin{enumerate}[label=(\roman*)]
    \item $\pi^{H_j}\rightarrow\pi$ weakly in $ L^2(S;\H^1_0(\Omega))$;
    \item \label{weakConvRho1}$\rho^{H_j,h_j}\rightarrow\rho$ weakly in $ L^2(S; L^2(\Omega;\H^1(Y)))$;
    \item\label{weakConvRho2} $\partial_t\rho^{H_j,h_j}\rightarrow\partial_t\rho$ weakly in $ L^2(S\times\Omega\times Y)$;
    \item\label{weakConvRho3} $\left.\rho^{H_j,h_j}\right|_{\Gamma_R}\rightarrow\left.\rho\right|_{\Gamma_R}$ weakly in $ L^2(S\times\Omega\times\Gamma_R)$.
\end{enumerate}
\end{proposition}
\begin{proof}
By (\ref{eq:energyPI}), $\{\pi^H\}$ is a bounded subset of $ L^2(S;\H^1_0(\Omega))$. Then there exist a subsequence $\{H_j\}$ and a function $\pi\in L^2(S;\H^1_0(\Omega))$ such that $\pi^{H_j}$ converges weakly to $\pi$ in $ L^2(S;\H^1_0(\Omega))$ as $j\rightarrow\infty$. Further, for any $h>0$ the set $\{\rho^{H_j,h}\}$ is a bounded subset of $ L^2(S; L^2(\Omega;\H^1(Y)))\cap\H^1(S; L^2(\Omega\times Y))$ and thus there is a subsequence of $H_j$ (also denoted $H_j$), a subsequence $h_j$ and a function 
$$
\rho\in L^2(S; L^2(\Omega;\H^1(Y)))\cap\H^1(S; L^2(\Omega\times Y))
$$ 
such that $\rho^{H_j,h_j}$ converges weakly to $\rho$ in $ L^2(S; L^2(\Omega;\H^1(Y)))\cap\H^1(S; L^2(\Omega\times Y))$ as $j\rightarrow\infty$, this proves \ref{weakConvRho1} and \ref{weakConvRho2}. Finally, \ref{weakConvRho3} follows from the fact that the trace operator
$$
{\rm Tr}: L^2(S; L^2(\Omega;\H^1(Y)))\rightarrow L^2(S\times\Omega\times\Gamma_R),\quad{\rm Tr}(u)=\left.u\right|_{\Gamma_R}
$$
is a bounded linear operator, and hence preserves weak convergence.
\end{proof}
\begin{remark}
We will show below that the pair $(\pi,\rho)$ provided by Proposition \ref{weakConvergenceProp} is a weak solution to $(P_1)$. However, the convergence statements of Proposition \ref{weakConvergenceProp} are not strong enough to allow us to pass to limit in (\ref{eq:weak_pi})-(\ref{eq:weak_rho}), due to the nonlinear term $f(\pi^H,g(\rho^{H,h}))$. The next two lemmas provide additional regularity that will help us strengthen the convergence, and also be useful in Section 4.
\end{remark}

\begin{lemma}
\label{sharperEnergyEstimates}
Let $(\pi^H,\rho^{H,h})$ be the solution to (\ref{eq:weak_pi})-(\ref{eq:weak_rho}) for $H,h>0$. Then

\begin{equation}
\label{eq:rho_x}
\|\nabla_x\rho^{H,h}\|_{ L^2(S\times\Omega\times Y)}\le C,
\end{equation}
and
\begin{equation}
\label{eq:rho_yx}
\|\nabla_y(\nabla_x\rho^{H,h})\|_{ L^2(S\times\Omega\times Y)}\le C.
\end{equation}
\end{lemma}
\begin{proof}
To prove (\ref{eq:rho_x}) and (\ref{eq:rho_yx}), we adapt an interior regularity argument from \cite{evans10}, Chapter 6. For any $\delta>0$, set $\Omega_\delta=\{x\in\Omega: {\rm dist}(x,\partial\Omega)\ge\delta\}$. Then $\Omega_\delta\subset\subset\Omega$ and there is an open set $W$ such that $\Omega_\delta\subset W\subset \Omega$ and a smooth function $\zeta:\Omega\rightarrow[0,1]$ with
	\begin{equation}
	    \begin{cases}
            \zeta(x) =1 &\mbox{ for }x \in \Omega_\delta,\\
            \zeta(x) =0 &\mbox{ for }x \in \Omega\setminus W.
	    \end{cases}
	\end{equation}
We introduce the directional finite difference
	\[D_i^\lambda\rho^{H,h}:= \frac{\rho^{H,h}(t,x+\lambda e_i,y) - \rho^{H,h}(t,x,y)}{\lambda }\mbox{ for }\lambda >0.\]
    for $i\in\{1,...,d_1\}$. Let $0<\lambda<\delta$ and test \eqref{eq:weak_rho} with
	\[\psi = -D_i^{-\lambda}\zeta^2D_i^\lambda \rho^{H,h},\] which gives us:
	\begin{equation}
	     -\int\partial_t\rho D_i^{-\lambda}\zeta^2D_i^\lambda \rho-D\int\nabla_y\rho\cdot\nabla_y D_i^{-\lambda }\zeta^2D_i^\lambda\rho
	     = -\kappa\int(\pi + p_F - R\rho)D_i^{-\lambda}\zeta^2D_i^\lambda \rho.
	    \label{eq:weak_cutoff}
	\end{equation}
Because of the properties of the support of $\zeta$, it holds that for any $f\in \Omega $
	\begin{equation}
	    \int_{\Omega}\psi D^{-\lambda}_if = -\int_{\Omega}f D^{\lambda}_i\psi.
        \label{eq:diffness}
	\end{equation}
Applying the property in \eqref{eq:diffness} to \eqref{eq:weak_cutoff} yields
	\begin{equation}
	\begin{split}
	    &\int_{\Omega\times Y}\zeta^2D_i^\lambda \partial_t\rho^{H,h} D_i^\lambda \rho^{H,h}+ D\int_{\Omega\times Y} \zeta^2D_i^{\lambda }\nabla_y\rho^{H,h}\cdot D_i^\lambda\nabla_y \rho^{H,h}\\
	    &\quad = \kappa\int_{\Omega\times \Gamma_R}\zeta^2D_i^\lambda(\pi^H + p_F - R\rho^{H,h})D_i^\lambda\rho^{H,h},
	\end{split}
	\end{equation}
	leading to
	\begin{equation}
    	\begin{split}
    	    &\frac{1}{2}\frac{d}{dt}\int_{\Omega\times Y} \left|\zeta D_i^\lambda\rho^{H,h}\right|^2+ D\int_{\Omega\times Y} \left|\zeta D_i^{\lambda}\nabla_y\rho^{H,h}\right|^2\\
    	    &\quad = \kappa\int_{\Omega\times \Gamma_R}\zeta^2D_i^\lambda\pi^H D_i^\lambda\rho^{H,h}- \kappa R\int_{\Omega\times \Gamma_R}\left|\zeta D_i^\lambda\rho^{H,h}\right|^2.
    	\end{split}
	    \label{eq:compact_cutoff}
	\end{equation}
	Using Young's inequality combined with the inequality, we estimate the third term of \eqref{eq:compact_cutoff} as follows:
	\begin{equation}
	    \begin{split}
		 &\kappa\int_{\Omega\times \Gamma_R}\zeta^2D_i^\lambda\pi^H D_i^\lambda\rho^{H,h}\\
		&\leq \kappa|\Gamma_R|\ ||D_i^\lambda\pi^H||_{ L^2(\Omega)}\ ||\zeta D_i^\lambda\rho^{H,h}||_{ L^2(\Omega\times \Gamma_R)},\\
		&\leq c_\varepsilon\kappa|\Gamma_R|\ ||D_i^\lambda\pi^H||^2_{ L^2(\Omega)}+ \varepsilon||\zeta D_i^\lambda\rho^{H,h}||^2_{ L^2(\Omega\times \Gamma_R)},\\
		&\leq C_\varepsilon\kappa|\Gamma_R|\ ||\nabla_x \pi^H||^2_{ L^2(\Omega)} + \varepsilon||\zeta D_i^\lambda\rho^{H,h}||_{\Omega\times Y}||\zeta D_i^\lambda\nabla_y\rho^{H,h}||_{ L^2(\Omega\times Y)},\\
		&\leq C_\varepsilon\kappa|\Gamma_R|\ ||\nabla_x \pi^H||^2_{ L^2(\Omega)} + \frac{\varepsilon^2}{2}||\zeta D_i^\lambda\rho^{H,h}||_{\Omega\times Y}\\
		&\quad + \frac{\varepsilon^2}{2}||\zeta D_i^\lambda\nabla_y\rho^{H,h}||_{ L^2(\Omega\times Y)}.
	    \end{split}
	    \label{eq:cutoff_ineq}
	\end{equation}
Now, combining \eqref{eq:cutoff_ineq} with \eqref{eq:compact_cutoff}, we obtain
	\begin{equation}
    	\begin{split}
    	    &\frac{1}{2}\frac{d}{dt}\int_{\Omega\times Y} \left|\zeta D_i^\lambda\rho^{H,h}\right|^2+ \left(D-\frac{\varepsilon^2}{2}\right)\int_{\Omega\times Y} \left|\zeta D_i^{\lambda}\nabla_y\rho^{H,h}\right|^2 \\
    	    &\quad \leq C_\varepsilon\kappa|\Gamma_R|\ ||\nabla_x \pi^H||^2_\Omega + \frac{\varepsilon^2}{2}||\zeta D_i^\lambda\rho^{H,h}||_{\Omega\times Y}.
    	\end{split}
	\end{equation}
	Using Gr\"onwall's inequality, we conclude that $D_i^\lambda\rho^{H,h} \in L^2(\Omega \times Y)$, and by letting $\lambda\to0$, we obtain
\begin{equation}
\nonumber
\nabla_x\rho^{H,h}\in L^2(S\times\Omega\times Y)\quad{\rm and}\quad \nabla_y(\nabla_x\rho^{H,h})\in  L^2(S\times\Omega\times Y).
\end{equation}
\end{proof}
With the results obtained above, we are ready to state and prove the first two main result of this paper.

\begin{theorem}
\label{Theorem:conv}
The problem ($P_1$) has a weak solution, i.e. there exist functions 
$$
\pi\in L^2(S;\H^1_0(\Omega))\quad{\rm and}\quad\rho\in L^2(S; L^2(\Omega;\H^1(Y)))\cap\H^1(S; L^2(\Omega\times Y))
$$
satisfying (\ref{eq:weak_pi_cont})-(\ref{eq:weak_rho_cont}).
\end{theorem}

\begin{proof}
Let $(\pi^{H_j},\rho^{H_j,h_j})$ and $(\pi,\rho)$ be provided by Proposition \ref{weakConvergenceProp}. Inserting $(\pi^{H_j},\rho^{H_j,h_j})$ into (\ref{eq:weak_pi})-(\ref{eq:weak_rho}) and using the convergence statements of Proposition \ref{weakConvergenceProp}, it follows that $(\pi,\rho)$ solves (\ref{eq:weak_pi_cont})-(\ref{eq:weak_rho_cont}) if we can show that
\begin{equation}
\label{limit-f}
 \lim_{j\rightarrow\infty}\int_\Omega f(\pi^{H_j},g(\rho^{H_j,h_j}))\varphi dx=\int_\Omega f(\pi,g(\rho))\varphi dx   
\end{equation}
for all $\varphi\in\H^1_0(\Omega)$.
Denote by
$$
\mathcal{V}=\{u\in L^2(S;\H^1_0(\Omega)):\partial_t u\in  L^2(S\times\Omega)\}.
$$
and 
$$
\mathcal{W}=\{u\in L^2(S;\H^1(\Omega\times Y)): \partial_tu\in  L^2(S\times\Omega\times Y)\}.
$$
By the Rellich-Kondrachov theorem, we have the compact embeddings
$$
\H_0^1(\Omega)\subset\subset L^2(\Omega)
$$
and
$$
\H^1(\Omega\times Y)\subset\subset L^2(\Omega\times Y).
$$
Hence, Aubin-Lions' lemma (Lemma~\ref{lem:aubinlions}) gives that
$$
\mathcal{V}\subset\subset  L^2(S\times\Omega)\quad{\rm and}\quad\mathcal{W}\subset\subset  L^2(S\times\Omega\times Y).
$$
Note that it follows from Lemma \ref{EnergyEstimates} and Lemma \ref {sharperEnergyEstimates} that $\{\pi^{H_j}\}$ and $\{\rho^{H_j,h_j}\}$ are bounded subsets of $\mathcal{V}$ and $\mathcal{W}$ respectively. Then, by compactness, $\{\pi^{H_j}\}$ and $\{\rho^{H_j,h_j}\}$ have subsequences (also denoted $\{\pi^{H_j}\}$ and $\{\rho^{H_j,h_j}\}$) that converge strongly in the spaces $ L^2(S\times\Omega)$ and $ L^2(S\times\Omega\times Y)$ respectively, and these strong limits must coincide with $\pi$ and $\rho$. By continuity of $f$ and $g$, (\ref{limit-f}) follows, and the theorem is proved.
\end{proof}

\begin{theorem}[Uniqueness of the weak solution]
\label{uniquenessTheorem}
The weak solution to problem $(P_1)$ is unique.
\end{theorem}
\begin{proof}
Assume that there are two weak solution $(\pi_1,\rho_1)$ and $(\pi_2,\rho_2)$. Subtract the weak formulation (\ref{eq:weak_pi_cont})-(\ref{eq:weak_rho_cont}) for $(\pi_2,\rho_2)$ from that of $(\pi_1,\rho_1)$ and test with $\pi_1-\pi_2$ and $\rho_1-\rho_2$, we obtain
\begin{align}
    \label{unique:1}
    &A\|\nabla_x(\pi_1-\pi_2)\|^2_{ L^2(\Omega)}=\int_\Omega \left[f(\pi_1,\rho_1)-f(\pi_2,\rho_2)\right](\pi_1-\pi_2)dx\\
    \label{unique:2}
    \begin{split}
    &\frac{1}{2}\frac{d}{dt}\|\rho_1-\rho_2\|^2_{ L^2(\Omega\times Y)}+D\|\nabla_y(\rho_1-\rho_2)\|^2_{ L^2(\Omega\times Y)}=\\
    &=\kappa\int_\Omega\int_{\Gamma_R}\left[\pi_1-\pi_2+(p_F-R)(\rho_1-\rho_2)\right](\rho_1-\rho_2)d\sigma_ydx\\
    &=\kappa\int_\Omega\int_{\Gamma_R}(\pi_1-\pi_2)(\rho_1-\rho_2)d\sigma_y dx+\kappa(p_F-R)\|\rho_1-\rho_2\|^2_{ L^2(\Omega\times\Gamma_R)}
    \end{split}
\end{align}
We estimate the right-hand side of (\ref{unique:1}). By Young's inequality, for any $\epsilon>0$ we have
$$
A\|\nabla_x(\pi_1-\pi_2)\|^2_{ L^2(\Omega)}\le\epsilon\|\pi_1-\pi_2\|^2_{ L^2(\Omega)}+\frac{1}{4\epsilon}\int_\Omega\left[f(\pi_1,g(\rho_1))-f(\pi_2,g(\rho_2))\right]^2dx
$$
Using \ref{as:rhs} and \ref{as:g-functional}, we can estimate the second term at the right-hand side of the previous inequality as follows
\begin{equation}
    \nonumber
    \begin{split}
    &\int_\Omega\left[f(\pi_1,g(\rho_1))-f(\pi_2,g(\rho_2))\right]^2dx\le\\
    &\le2\left(\int_\Omega(f(\pi_1,g(\rho_1))-f(\pi_2,g(\rho_1)))^2dx+\int_\Omega(f(\pi_2,g(\rho_1))-f(\pi_2,g(\rho_2))^2dx\right)\\
    &\le2\theta^2\left(\|\pi_1-\pi_2\|^2_{ L^2(\Omega)}+\|g(\rho_1)-g(\rho_2)\|^2_{ L^2(\Omega)}\right)\\
    &=2\theta^2\left(\|\pi_1-\pi_2\|^2_{ L^2(\Omega)}+\|g(\rho_1-\rho_2)\|^2_{ L^2(\Omega)}\right)\\
    &\le2\theta^2\|\pi_1-\pi_2\|^2_{ L^2(\Omega)}+2C_g\theta^2\|\rho_1-\rho_2\|^2_{ L^2(\Omega;\H^1(Y))}
    \end{split}
\end{equation}
Hence,
\begin{equation}
    \nonumber
    A\|\nabla_x(\pi_1-\pi_2)\|^2_{ L^2(\Omega)}\le\left(\epsilon+\frac{\theta^2}{2\epsilon}\right)\|\pi_1-\pi_2\|^2_{ L^2(\Omega)}+\frac{C_g\theta^2}{4\epsilon}\|\rho_1-\rho_2\|^2_{ L^2(\Omega;\H^1(Y))}
\end{equation}
and by Poincar\'e's inequality we obtain
\begin{equation}
    \nonumber
    \left(\frac{A}{\mathcal{C}_\Omega}-\epsilon-\frac{\theta^2}{2\epsilon}\right)\|\pi_1-\pi_2\|^2_{ L^2(\Omega)}\le \frac{C_g\theta^2}{4\epsilon}\|\rho_1-\rho_2\|^2_{ L^2(\Omega;\H^1(Y))}.
\end{equation}
Taking $\epsilon=\theta$ we obtain
$$
\left(\frac{A}{\mathcal{C}_\Omega}-\frac{3\theta}{2}\right)\|\pi_1-\pi_2\|^2_{ L^2(\Omega)}\le\frac{C_g\theta}{4}\|\rho_1-\rho_2\|^2_{ L^2(\Omega;\H^1(Y))}.
$$
Since $\theta<2\mathcal{C}_\Omega/A<1$ it also holds that $\theta<A/(2\mathcal{C}_\Omega)$ and thus
$$
\frac{A}{\mathcal{C}_\Omega}-\frac{3\theta}{2}>2\theta-\frac{3\theta}{2}=\frac{\theta}{2}.
$$
Whence,
\begin{equation}
    \label{unique:3}
    \|\pi_1-\pi_2\|^2_{ L^2(\Omega)}\le \frac{C_g}{2}\|\rho_1-\rho_2\|^2_{ L^2(\Omega;\H^1(Y))}.
\end{equation}
By using Young's inequality with parameter $\epsilon$ (to be determined below) and (\ref{unique:3}) at the right-hand side of (\ref{unique:2}), we obtain
\begin{equation}
    \nonumber
    \begin{split}
    &\frac{1}{2}\frac{d}{dt}\|\rho_1-\rho_2\|^2_{ L^2(\Omega\times Y)}+D\|\nabla_y(\rho_1-\rho_2)\|^2_{ L^2(\Omega\times Y)}\le\\
    &\le\epsilon\kappa|\Gamma_R|\|\pi_1-\pi_2\|^2_{ L^2(\Omega)}+\left(\frac{\kappa}{4\epsilon}+\kappa|p_F-R|\right)\|\rho_1-\rho_2\|^2_{ L^2(\Omega\times\Gamma_R)}\\
    &\le\frac{C_g\epsilon\kappa|\Gamma_R|}{2}\|\rho_2-\rho_1\|^2_{ L^2(\Omega;\H^1(Y))}+\left(\frac{\kappa}{4\epsilon}+\kappa|p_F-R|\right)\|\rho_1-\rho_2\|^2_{ L^2(\Omega\times\Gamma_R)}
    \end{split}
\end{equation}
Applying the interpolation-trace inequality with parameter $\epsilon^2$, we get
\begin{equation}
    \nonumber
    \begin{split}
    &\left(\frac{\kappa}{4\epsilon}+\kappa|p_F-R|\right)\|\rho_1-\rho_2\|^2_{ L^2(\Omega\times\Gamma_R)}\\
    &\le\epsilon\left(\frac{\kappa}{4}+\mathcal{O}(\epsilon)\right)\|\nabla_y(\rho_1-\rho_2)\|^2_{ L^2(\Omega\times Y)}+\mathcal{O}(\epsilon^{-3})\|\rho_1-\rho_2\|^2_{ L^2(\Omega\times Y)}
    \end{split}
\end{equation}
Hence,
\begin{equation}
    \nonumber
    \begin{split}
    &\frac{1}{2}\frac{d}{dt}\|\rho_1-\rho_2\|^2_{ L^2(\Omega\times Y)}+D\|\nabla_y(\rho_1-\rho_2)\|^2_{ L^2(\Omega\times Y)}\le\\
    &\le\epsilon\left(\frac{\kappa(1+2C_g)}{4}+\mathcal{O}(\epsilon)\right)\|\nabla_y(\rho_1-\rho_2)\|^2_{ L^2(\Omega\times Y)}+\mathcal{O}(\epsilon^{-3})\|\rho_1-\rho_2\|^2_{ L^2(\Omega\times Y)}
    \end{split}
\end{equation} 
Now take $\epsilon_0$ such that 
\begin{equation*}
    \epsilon_0\left(\frac{\kappa(1+2C_g)}{4}+\mathcal{O}(\epsilon_0)\right)<\frac{D}{2},    
\end{equation*}
then we get
\begin{equation}
\nonumber
\frac{1}{2}\frac{d}{dt}\|\rho_1-\rho_2\|^2_{ L^2(\Omega\times Y)}+\frac{D}{2}\|\nabla_y(\rho_1-\rho_2)\|^2_{ L^2(\Omega\times Y)}\le\frac{C}{\epsilon_0^3}\|\rho_1-\rho_2\|^2_{ L^2(\Omega\times Y)},
\end{equation}
by Gr\"{o}nwall's inequality we obtain
$$
\|(\rho_1-\rho_2)(t)\|^2_{ L^2(\Omega\times Y)}\le C\|(\rho_1-\rho_2)(0)\|^2_{ L^2(\Omega\times Y)}=0
$$
in other words $\rho_1=\rho_2$ a.e., and it follows from (\ref{unique:3}) that $\pi_1=\pi_2$ a.e.
\end{proof}
 \section{Convergence rates for semidiscrete Galerkin approximations}
\label{sec:convergence}
In this section, we obtain convergence rates of the numerical approximations \eqref{eq:weak_pi} -- \eqref{eq:weak_rho}.
The following argument is largely based on standard arguments from e.g. \cite{larsson2008partial}, adapted to multiscale systems.

\begin{proposition}[Regularity lift]
    Recall \ref{as:smooth_initial} and \ref{as:lips_boundary}. Let $(\pi,\rho)$ be the weak solution to $(P_1)$. Then
    \begin{align*}
        \pi\in  L^2(S;\H^2(\Omega)),\\
        \rho\in L^2(S;\H^2(\Omega\times Y)).
     \end{align*}
\end{proposition}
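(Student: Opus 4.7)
The plan is to obtain $H^2$ regularity in two stages: first for the macroscopic pressure $\pi^H$ via elliptic regularity on the convex polygonal domain $\Omega$, and then for the microscopic density $\rho^{H,h}$ via elliptic regularity in $y$ combined with a translation-in-$x$ argument that transfers the newly gained regularity of $\pi^H$ into the macroscopic variable of $\rho^{H,h}$.

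For $\pi^H$, by the weak maximum principle (Lemma~\ref{lem:max}) both $\pi^H$ and $\rho^{H,h}$ are uniformly bounded, so assumption \ref{as:rhs} yields $f(\pi^H,\rho^{H,h}) \in \L^\infty(S\times\Omega) \embed \L^2(S;\L^2(\Omega))$. Since $\Omega$ is a convex polygon by \ref{as:lips_boundary}, classical $H^2$ elliptic regularity for the Poisson problem with homogeneous Dirichlet data on convex polygonal domains applies and gives, for a.e.\ $t \in S$,
\[
\norm{\pi^H(t,\cdot)}_{H^2(\Omega)} \leq \frac{C}{A}\,\norm{f(\pi^H,\rho^{H,h})(t,\cdot)}_{\L^2(\Omega)},
\]
so integration in time delivers $\pi^H \in \L^2(S;H^2(\Omega))$.

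For $\rho^{H,h}$, I would first handle $H^2$-regularity in $y$. Rewriting the microscopic equation as the $y$-elliptic problem $-D\Delta_y \rho^{H,h} = -\partial_t \rho^{H,h}$ with mixed Robin--Neumann data, the bound on $\partial_t \rho^{H,h}$ in $\L^2(S\times\Omega\times Y)$ established in the proof of Proposition~\ref{prop:conv} (cf.\ \eqref{eq:partial_t_rho}), together with the trace of $\pi^H$ appearing in the Robin datum, places both the right-hand side and the boundary datum in $\L^2$. Since $Y$ is also a convex polygon, elliptic regularity for the mixed boundary value problem yields $\rho^{H,h}(t,x,\cdot) \in H^2(Y)$ for a.e.\ $(t,x)$. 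To lift regularity in the macroscopic variable, I would iterate the cutoff-and-translation argument from the proof of Proposition~\ref{prop:conv} (following \cite{evans10}, Chapter 6), now applied to second-order difference quotients $D_i^{-\lambda}D_j^{\lambda}\rho^{H,h}$ in $x$. Using the $H^2(\Omega)$ bound on $\pi^H$ just obtained to control the resulting Robin boundary contributions, the same bootstrap gives $\nabla_x^2 \rho^{H,h}$, $\nabla_x^2 \nabla_y \rho^{H,h}$ and $\nabla_x^2 \nabla_y^2 \rho^{H,h}$ in $\L^2(S\times\Omega\times Y)$, which is precisely the assertion $\rho^{H,h} \in \L^2(S; H^2(\Omega; H^2(Y)))$.

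The principal obstacle is that the macroscopic regularity of $\rho^{H,h}$ cannot be obtained from purely microscopic elliptic arguments: it must be transferred from $\pi^H$ through the coupling in the Robin datum on $\Gamma_R$. Closing the translation argument in $x$ therefore hinges on combining the trace inequality with the freshly established $H^2(\Omega)$ estimate on $\pi^H$, so that the two-scale bootstrap is consistent and the boundary terms arising after two discrete $x$-differentiations remain controlled by quantities already bounded in the previous regularity lift.
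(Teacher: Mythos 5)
The paper offers no argument of its own here --- it simply refers to \cite{grisvard11} --- so your sketch cannot be compared line by line with an official proof. For the macroscopic part, your route (Lemma~\ref{lem:max} to place $f(\pi^H,\rho^{H,h})$ in $\L^2(\Omega)$, then $H^2$ elliptic regularity for the homogeneous Dirichlet problem on the convex polygon guaranteed by \ref{as:lips_boundary}) is precisely what that citation is meant to cover, and that half of your argument is sound.

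The two-scale part, however, has genuine gaps. First, the equation for $\rho^{H,h}$ contains no $x$-derivatives: $x$ enters only parametrically, through the Robin datum $\pi^H(t,x)$ in \eqref{eq:main_robin} and through the initial datum $\rho_I(x,\cdot)$. Hence no bootstrap can \emph{generate} $x$-regularity; it can only transfer regularity already present in the data. To reach $H^2$ in $x$ you therefore need $\rho_I$ to be $H^2$ in $x$ (with values in a suitable space over $Y$), whereas \ref{as:smooth_initial} gives only $\rho_I\in\L^2(\Omega;H^1(Y))$; your second-order difference quotients $D_i^{-\lambda}D_j^{\lambda}\rho^{H,h}$ hit the initial condition as well, and with the stated assumptions those terms are not controlled, so the translation argument does not close. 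Second, the cutoff-and-translation scheme you propose to iterate is an interior estimate (the cutoff $\zeta$ vanishes near $\partial\Omega$), so at best it yields $H^2_{\mathrm{loc}}$ in $x$, not the global $H^2(\Omega;\cdot)$ regularity asserted in the proposition and needed later for the Ritz projection estimates. Third, for the $y$-regularity you treat $-D\Delta_y\rho^{H,h}=-\partial_t\rho^{H,h}$ as a mixed Robin--Neumann problem and assert $H^2(Y)$ from $\L^2$ data: what is actually required is the boundary datum in $H^{1/2}$ on each boundary part (which you do get, since $\pi^H$ is constant in $y$ and the trace of $\rho^{H,h}\in H^1(Y)$ lies in $H^{1/2}(\Gamma_R)$), and, more seriously, $H^2$ regularity for mixed boundary conditions on a polygon can fail at the junction points of $\Gamma_R$ and $\Gamma_N$ unless the corner/compatibility conditions of \cite{grisvard11} are verified --- a point that must be checked, not assumed.
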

\begin{proof}
We omit the proof. We refer the interested reader to \cite{evans10,grisvard11}, where the regularity lifting arguments can be adapted to fit this specific case. 
\end{proof}

Let $\ri_H : \H^1(\Omega) \to V_H$ and $\ri_h:  L^2(\Omega;\H^1(Y)) \to  L^2(\Omega;W_h)$ be the microscopic and macroscopic Ritz projection operator respectively. The projections $\ri_H r(x)$ and $\ri_h s(x,y)$ are defined such that:
\begin{align*}
    A\nabla_x (r(x) - \ri_H r(x)) \cdot \nabla_x v &= 0,\\
    D\nabla_y (s(x,y) - \ri_h s(x,y)) \cdot \nabla_y w &= 0,
\end{align*}
for all $v \in V_H$ and $w \in  L^2(\Omega;\H^1(Y))$.
\begin{lemma}[Projection error estimates]
    \label{lem:proj_error_est}
    Then there exists strictly positive constants $\gamma_l$ (with $l\in \{1,2,3,4\})$, independent of $h$ and $H$, such that projections $\ri_h\pi$ and $\ri_H\rho$ that satisfy
    \begin{align}
        \label{eq:proj_1}
        ||\pi - \ri_H\pi||_{ L^2(\Omega)} &\leq \gamma_1 H^2 ||\pi||_{\H^2(\Omega)},\\
        \label{eq:proj_2}
        ||\pi - \ri_H\pi||_{\H_0^1(\Omega)} &\leq \gamma_2 H ||\pi||_{\H^2(\Omega)},\\
        \label{eq:proj_3}
        ||\rho - \ri_H\ri_h\rho||_{ L^2(\Omega; L^2(Y))} &\leq \gamma_3(H^2+h^2) ||\rho||_{ L^2(\Omega;\H^2(Y))\cap L^2(Y;\H^2(\Omega))},
    \end{align}
    hold for all $(\pi,\rho) \in \H^2(\Omega)\times \left[  L^2(\Omega;\H^2(Y))\cap L^2(Y;\H^2(\Omega)) \right]$.
\end{lemma}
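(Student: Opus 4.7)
The plan is to obtain the three estimates by a standard two-step argument: first compare the Ritz projection to a nodal (Lagrange) interpolant, then invoke a classical interpolation estimate on each simplex. Under \ref{as:lips_boundary} the meshes $\B_H$ and $\K_h$ are quasi-uniform shape-regular simplicial partitions of convex polygonal domains, so all standard polynomial interpolation estimates from \cite{larsson2008partial} (Bramble--Hilbert) apply on each element. Throughout, $\ri_H$ is defined by the macroscopic bilinear form $a_\Omega(u,v)=A\int_\Omega\nabla_x u\cdot\nabla_x v\,dx$ on $H^1_0(\Omega)$, and $\ri_h$ is defined by the microscopic bilinear form $a_Y(u,v)=D\int_Y\nabla_y u\cdot\nabla_y v\,dy+\kappa R\int_{\Gamma_R}uv\,d\sigma_y$ on $H^1(Y)$; both forms are continuous and coercive on their respective spaces by \ref{as:positive_params}, so the Ritz projections are well defined.

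First I would prove \eqref{eq:proj_2}. By C\'ea's lemma applied to $a_\Omega$,
\[
\|\pi-\ri_H\pi\|_{H^1(\Omega)}\leq C\inf_{v\in V_H}\|\pi-v\|_{H^1(\Omega)},
\]
and choosing $v$ to be the Lagrange interpolant $\mathcal I_H\pi$ of $\pi$, the Bramble--Hilbert lemma on each simplex $B\in\B_H$ gives $\|\pi-\mathcal I_H\pi\|_{H^1(B)}\leq CH_B\|\pi\|_{H^2(B)}$. Summing over $B$ and using $H=\max_B H_B$ yields \eqref{eq:proj_2} with constant $\gamma_2$ absorbing $C$ and the ellipticity/continuity constants of $a_\Omega$. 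For \eqref{eq:proj_1} I would run the Aubin--Nitsche duality argument: for $\varphi\in L^2(\Omega)$, let $w\in H^1_0(\Omega)$ solve $-A\Delta_x w=\varphi$ in $\Omega$, which by elliptic regularity on convex polygons satisfies $\|w\|_{H^2(\Omega)}\leq C\|\varphi\|_{L^2(\Omega)}$; testing Galerkin orthogonality against $w-\ri_H w$ and applying \eqref{eq:proj_2} to both $\pi$ and $w$ produces the extra factor of $H$, giving the $H^2$ rate.

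For \eqref{eq:proj_3} I would exploit the tensor-product structure of the finite element space $V_H\otimes W_h$ by writing the error as a telescoping sum
\[
\rho-\ri_H\ri_h\rho=(\rho-\ri_h\rho)+\ri_h(\rho-\ri_H\rho)+(\ri_H\rho-\ri_H\ri_h\rho)-\ri_H(\rho-\ri_h\rho),
\]
which after grouping becomes $\rho-\ri_H\ri_h\rho=(I-\ri_H)\rho+(I-\ri_h)(\ri_H\rho)+\text{cross terms that cancel}$, or more cleanly the identity $\rho-\ri_H\ri_h\rho=(\rho-\ri_H\rho)+\ri_H(\rho-\ri_h\rho)$. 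Applying the $L^2$ estimate \eqref{eq:proj_1} in the $x$-variable to $\rho(\cdot,y)\in H^2(\Omega)$ for a.e.\ $y$ bounds the first piece by $\gamma_1 H^2\|\rho\|_{L^2(Y;H^2(\Omega))}$; applying the analog of \eqref{eq:proj_1} in the $y$-variable on $Y$ to $\rho(x,\cdot)\in H^2(Y)$ for a.e.\ $x$, together with the uniform $L^2$-stability of $\ri_H$ (which follows from the same duality bound applied once), controls the second piece by $\gamma h^2\|\rho\|_{L^2(\Omega;H^2(Y))}$. Adding these gives \eqref{eq:proj_3}.

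The main obstacle I anticipate is the $L^2$-stability of $\ri_H$ in the two-scale argument: C\'ea alone only gives $H^1$-stability, so one needs either to invoke Aubin--Nitsche once more or to replace $\ri_H$ by the Lagrange interpolant (which is manifestly $L^2$-stable on quasi-uniform meshes) and absorb the resulting terms. A second subtlety is that $\ri_h$ is defined with a boundary contribution from $\Gamma_R$, so its consistency error must be accounted for using Lemma~\ref{lem:trace_ineq} to control the trace on $\Gamma_R$ by the $H^1(Y)$ norm; this only contributes lower-order terms and does not affect the stated rates.
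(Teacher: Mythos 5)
Your proposal is correct and follows essentially the same route as the paper: \eqref{eq:proj_1}--\eqref{eq:proj_2} are treated as standard Ritz estimates (C\'ea plus Aubin--Nitsche duality on the convex polygon), and \eqref{eq:proj_3} is obtained by a triangle-inequality splitting of the two-scale error into a macroscopic and a microscopic projection error, with the Robin contribution on $\Gamma_R$ handled through a dual problem and the trace inequality, exactly as in the paper's auxiliary problem $(P_2)$. The only (immaterial) difference is the order of the splitting: you write $\rho-\ri_H\ri_h\rho=(\rho-\ri_H\rho)+\ri_H(\rho-\ri_h\rho)$ and therefore need $\L^2$-stability of $\ri_H$, whereas the paper uses $(\rho-\ri_h\rho)+(\ri_h\rho-\ri_H\ri_h\rho)$ and correspondingly needs boundedness of $\ri_h\rho$ in the $x$-direction $H^2$-norm, a point it glosses over just as briefly as you flag yours.
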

\begin{proof}
    \eqref{eq:proj_1} and \eqref{eq:proj_2} are standard Ritz projection error estimates.
    For details on the proof, see for instance \cite{thomee1984} and \cite{larsson2008partial}.
    Specific to this context, \eqref{eq:proj_3} is a two-scale estimate which accounts for the presence of the microscopic Robin boundary condition \eqref{eq:main_robin} and therefore requires some tuning. See e.g. \cite{RIMS} for similar estimates.
    Here, we only present the proof of \eqref{eq:proj_3}.

    Let $\omega:=\ri_h \rho - \rho$. Let $\varphi \in  L^2(\Omega;\H^2(Y))$ be the weak solution to
    \begin{equation}
        (P_2)\begin{cases}
        -\Delta \varphi = \omega& \mbox{ in }\Omega\times Y,\\
        -\nabla \varphi\cdot n = \alpha\varphi &\mbox{ on } \Omega\times\Gamma_R,\\
        -\nabla \varphi\cdot n = 0 &\mbox{ on } \Omega\times\Gamma_N.
    \end{cases}
    \end{equation}
    We denote the Ritz projection error of $\varphi$ with $e_\varphi$.
    By testing with $\psi$ and integrating over $\Omega\times Y$, we obtain
    \begin{equation}
        \langle \omega,\psi \rangle_{ L^2(\Omega;L^2(Y))} = \langle \nabla \varphi,\nabla \psi \rangle_{ L^2(\Omega;L^2(Y))} + \langle \nabla \varphi \cdot n,\psi \rangle_{ L^2(\Omega;L^2(\Gamma_R))}.
    \end{equation}
    Testing with $\psi=\omega$ specifically, subtracting the Galerkin approximation from the weak solution and using $(\ri_h\Delta\varphi,\omega)=0$ we obtain:
    \begin{align*}
        &||\omega||_{ L^2(\Omega;L^2(Y))}^2 \\
        &= \langle \nabla \varphi, \nabla \omega \rangle_{ L^2(\Omega;L^2(Y))} + \langle \alpha\varphi,\omega \rangle_{ L^2(\Omega;L^2(\Gamma_R))},\\
        &= \langle \nabla e_\varphi, \nabla \omega \rangle_{ L^2(\Omega;L^2(Y))} + \langle \alpha e_\varphi, \omega \rangle_{ L^2(\Omega;L^2(\Gamma_R))},\\
        &\leq c_\varepsilon \left||\nabla e_\varphi|\right|_{ L^2(\Omega;L^2(Y))} \left||\nabla \omega|\right|_{ L^2(\Omega;L^2(Y))} + \varepsilon\left||e_\varphi|\right|_{ L^2(\Omega;L^2(Y))} \left||\omega|\right|_{ L^2(\Omega;L^2(Y))}.
    \end{align*}
    Applying the Ritz projection estimates \eqref{eq:proj_1} and \eqref{eq:proj_2}, we obtain the following bound:
    \begin{equation*}
        ||\omega||_{ L^2(\Omega;L^2(Y))}^2 \leq c_\varepsilon h^2 ||\varphi||^2_{ L^2(\Omega;\H^2(Y))} + \varepsilon c h^2 ||\omega||_{ L^2(\Omega;L^2(Y))}.
    \end{equation*}
    Using Friedrich's inequality $||\varphi||_{ L^2(\Omega;\H^2(Y))} \leq C||\Delta \varphi||_{ L^2(\Omega;L^2(Y))} = C||\omega||_{ L^2(\Omega;L^2(Y))}$ for some $C$ and choosing $\varepsilon < c$ we obtain
    \begin{equation}
        (1-\varepsilon)||\omega||_{ L^2(\Omega;L^2(Y))}^2 \leq Ch^2||\omega||_{ L^2(\Omega;L^2(Y))}.
        \label{eq:omega_bound}
    \end{equation}
    \eqref{eq:omega_bound} yields:
    \begin{equation}
        ||\omega||_{ L^2(\Omega;L^2(Y))} = ||\ri_{h}\rho - \rho||_{ L^2(\Omega;L^2(Y))} \leq \bar{\gamma}_3h^2.
    \end{equation}
    Finally, we can derive \eqref{eq:proj_3} as follows:
    \begin{equation}
        \begin{split}
            &||\psi - \ri_H\ri_h\psi||_{ L^2(\Omega; L^2(Y))} = ||\psi - \ri_h \psi + \ri_h \psi - \ri_H\ri_h\psi||_{ L^2(\Omega; L^2(Y))},\\
            &\leq ||\psi - \ri_h \psi||_{ L^2(\Omega; L^2(Y))} + ||\ri_h \psi - \ri_H\ri_h\psi||_{ L^2(\Omega; L^2(Y))},\\
            &\leq \bar{\gamma}_3h^2 ||\psi||_{ L^2(\Omega;\H^2(Y))} + \bar{\gamma}_4H^2||\ri_h\psi||_{ L^2(Y;\H^2(\Omega))},\\
            &\leq \gamma_3(H^2 + h^2) ||\psi||_{ L^2(\Omega;\H^2(Y))\cap L^2(Y;\H^2(\Omega))}. 
        \end{split}
    \end{equation}
\end{proof}
By applying Lemma~\ref{lem:trace_ineq} and Lemma~\ref{lem:proj_error_est}, we can finally obtain the desired convergence rates. Let us denote the errors of the Galerkin projection as
\begin{align*}
    e_\pi := \pi - \pi^H,\\
    e_\rho := \rho - \rho^{H,h}.
\end{align*}
\begin{theorem}[Convergence rates]
Let $(\pi^H,\rho^{H,h})$ be a solution to \eqref{eq:weak_pi_cont}-\eqref{eq:weak_rho_cont} for $H,h>0$.
Then there exists a constant $C$ independent of $h$ and $H$, such that
    \begin{align}
        \label{eq:conv_rate_pi}||e_\pi||_{ L^\infty(S;  L^2(\Omega))} &\leq C(H^2 + h^2),\\
        \label{eq:conv_rate_rho}||e_\rho||_{ L^\infty(S;  L^2(\Omega\times Y))} &\leq C(H^2 + h^2).
    \end{align}
    \label{thm:priori}
\end{theorem}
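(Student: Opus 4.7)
The plan is to prove Theorem~\ref{thm:priori} using the classical Ritz-projection splitting adapted to the two-scale coupled setting. I would decompose the Galerkin errors as
\begin{equation*}
e_\pi = (\pi - \ri_H \pi) + (\ri_H \pi - \pi^H) =: \eta_\pi + \theta_\pi,
\qquad
e_\rho = (\rho - \ri_H \ri_h \rho) + (\ri_H \ri_h \rho - \rho^{H,h}) =: \eta_\rho + \theta_\rho,
\end{equation*}
so that the projection parts $\eta_\pi, \eta_\rho$ are controlled directly by Lemma~\ref{lem:proj_error_est}, contributing terms of the desired order $H^2 + h^2$ in the relevant norms. The task then reduces to estimating the discrete residuals $\theta_\pi \in V_H$ and $\theta_\rho \in V_H \times W_h$. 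A compatible choice of initial data, $\rho^{H,h}(0) = \ri_H \ri_h \rho_I$, ensures $\theta_\rho(0) = 0$, which is the starting point for the Gr\"onwall argument.

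For the elliptic component, I would subtract \eqref{eq:weak_pi} from \eqref{eq:weak_pi_cont}, restrict the test space to $V_H$ so that $\theta_\pi$ is admissible, and exploit Galerkin orthogonality of $\ri_H$ to kill the bilinear form applied to $\eta_\pi$. Combined with the Lipschitz structure of $f$ coming from \ref{as:rhs} and Poincar\'e's inequality on $H^1_0(\Omega)$, this yields a pointwise-in-time inequality of the shape
\begin{equation*}
\left( A - c_\pi c_p \right) \norm{\nabla_x \theta_\pi}_{\L^2(\Omega)}^2
\leq C \norm{\eta_\pi}_{\L^2(\Omega)}^2 + C \norm{e_\rho}_{\L^2(\Omega\times Y)}^2,
\end{equation*}
where the prefactor on the left is strictly positive thanks to \ref{as:diff_condition}. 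Thus, bounding $e_\pi$ in $H^1_0(\Omega)$ uniformly in time reduces to bounding $e_\rho$ and the known projection residual $\eta_\pi$.

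For the parabolic component, I would test the corresponding error equation with $\theta_\rho$. After using Galerkin orthogonality of the two-scale Ritz projection where possible and treating the $\eta_\rho$-terms as data, this produces an energy identity of the form
\begin{equation*}
\frac{1}{2}\frac{d}{dt}\norm{\theta_\rho}_{\L^2(\Omega\times Y)}^2 + D\norm{\nabla_y \theta_\rho}_{\L^2(\Omega\times Y)}^2
\leq \mbox{(projection terms)} + \kappa \int_\Omega \int_{\Gamma_R}(e_\pi - R e_\rho)\theta_\rho\,d\sigma_y\,dx.
\end{equation*}
I would apply Lemma~\ref{lem:trace_ineq} to convert the $\L^2(\Gamma_R)$ traces into bulk $\H^1(Y)$ norms, absorbing a small $\varepsilon$-share of $\norm{\nabla_y \theta_\rho}^2$ into the $D$-term on the left, and use the elliptic estimate above to control the $e_\pi$ contribution. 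Bounding the projection residuals by Lemma~\ref{lem:proj_error_est} and applying Young's inequality, followed by Gr\"onwall's inequality integrated over $S$, delivers the desired bound $\norm{\theta_\rho}_{\L^2(S;\L^2(\Omega;\H^1(Y)))} \leq C(H^2 + h^2)$. The triangle inequality then yields \eqref{eq:conv_rate_rho}, and substituting this back into the elliptic estimate gives \eqref{eq:conv_rate_pi} uniformly in time, which is legitimate because the elliptic problem is solved instantaneously at each time slice.

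The main obstacle is closing the Gr\"onwall loop through the microscopic boundary coupling. The mixed term $\int_\Omega \int_{\Gamma_R} e_\pi \theta_\rho\,d\sigma_y\,dx$ intertwines the macroscopic error of $\pi$ with a microscopic trace of $\rho$ and must be handled without producing a circular dependence between the $\theta_\pi$ and $\theta_\rho$ estimates. This requires carefully balancing the trace constant $c_i$ from Lemma~\ref{lem:trace_ineq} against $D$, $\kappa$, and the smallness margin provided by \ref{as:diff_condition}, so that after Young's inequality every cross-term can be absorbed either into the $D\norm{\nabla_y \theta_\rho}^2$ dissipation or into the coercive elliptic term, while preserving the quadratic rate in $H$ and $h$.
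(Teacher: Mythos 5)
Your proposal follows essentially the same route as the paper's proof: the Ritz-projection splitting $e_\rho = \theta + \psi$ (and the corresponding treatment of $e_\pi$), the elliptic estimate that bounds $e_\pi$ by $e_\rho$ using the Lipschitz structure of $f$, Poincar\'e's inequality and \ref{as:diff_condition}, the projection bounds of Lemma~\ref{lem:proj_error_est}, and a Gr\"onwall-type argument in time for the discrete residual. The only notable difference is that you treat the Robin coupling term $\kappa\int_\Omega\int_{\Gamma_R}(e_\pi - Re_\rho)\theta_\rho\,d\sigma_y\,dx$ explicitly via Lemma~\ref{lem:trace_ineq} and absorption into the $D$-dissipation, which the paper's written argument passes over after Galerkin orthogonality; this is a more careful rendering of the same strategy rather than a different one.
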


\begin{proof}
By testing \eqref{eq:weak_pi_cont} and \eqref{eq:weak_pi} with $\phi^H \in V_H$ and subtracting the equations, we obtain the identity
\begin{equation}
\label{eq:weakIdent}
A\int_\Omega \nabla_x(\pi-\pi^H)\cdot \nabla_x\phi^H dx = \int_\Omega \left( f(\pi,g(\rho)) - f(\pi^H,g(\rho^{H,h}))\right)\phi^H dx
\end{equation}
for any $\phi^H\in V_H$. Let $\varphi^H\in V_H$ be arbitrary. By (\ref{eq:weakIdent}) applied with $\phi^H=\varphi^H-\pi^H$, we obtain
\begin{equation}
\nonumber
\begin{split}
&\|\nabla_xe_\pi\|^2_{ L^2(\Omega)}=\int_\Omega\nabla_x(\pi-\pi^H)\cdot\nabla_x(\pi-\pi^H)dx\\
&=\int_\Omega\nabla_x(\pi-\pi^H)\cdot\nabla_x(\pi-\varphi^H)dx+\int_\Omega\nabla_x(\pi-\pi^H)\cdot\nabla_x(\varphi^H-\pi^H)dx\\
&=\int_\Omega\nabla_xe_\pi\cdot\nabla_x(\pi-\varphi^H)dx+\\
&+\frac{1}{A}\int_\Omega\left( f(\pi,g(\rho)) - f(\pi^H,g(\rho^{H,h}))\right)(\varphi^H-\pi^H)dx.
\end{split}
\end{equation}
By the inequality above, Cauchy-Schwarz' inequality and the triangle inequality, we get
\begin{equation}
\nonumber
\begin{split}
&\|\nabla_xe_\pi\|^2_{ L^2(\Omega)}\le\|\nabla_x e_\pi\|_{ L^2(\Omega)}\|\nabla_x(\pi-\varphi^H)\|_{ L^2(\Omega)}\\
&+\frac{1}{A}\int_\Omega\left| f(\pi,g(\rho)) - f(\pi^H,g(\rho^{H,h}))\right||e_\pi|dx\\
&+\frac{1}{A}\int_\Omega\left| f(\pi,g(\rho)) - f(\pi^H,g(\rho^{H,h}))\right||\pi-\varphi^H|dx.
\end{split}
\end{equation}
Applying Cauchy-Schwarz inequality to the last two terms above and denoting 
$$
J^{H,h}=\left\|f(\pi,g(\rho))-f(\pi^H,g(\rho^{H,h})\right\|_{ L^2(\Omega)},
$$
we obtain
\begin{align}
\nonumber
&\|\nabla_xe_\pi\|^2_{ L^2(\Omega)}\le\\
\nonumber
&\le\|\nabla_xe_\pi\|_{ L^2(\Omega)}\|\nabla_x(\pi-\varphi^H)\|_{ L^2(\Omega)}
+\frac{J^{H,h}}{A}\left(\|e_\pi\|_{ L^2(\Omega)}+\|\pi-\varphi^H\|_{ L^2(\Omega)}\right)\\
\nonumber
&\le\frac{\|\nabla_xe_\pi\|^2_{ L^2(\Omega)}+\|\nabla_x(\pi-\varphi^H)\|^2_{ L^2(\Omega)}}{2}+\frac{1}{2A}\left((J^{H,h})^2+\|e_\pi\|^2_{ L^2(\Omega)}\right)\\
\nonumber
&+\frac{1}{2A}\left((J^{H,h})^2+\|\pi-\varphi^H\|^2_{ L^2(\Omega)}\right).
\end{align}
Rearranging the above inequality gives
$$
\frac{1}{2}\|\nabla_xe_\pi\|^2_{ L^2(\Omega)}-\frac{1}{2A}\|e_\pi\|^2_{ L^2(\Omega)}\le C\|\pi-\varphi^H\|^2_{\H^1_0(\Omega)}+\frac{(J^{H,h})^2}{A}
$$
Using (\ref{eq:poincareIneq}) and \ref{as:parameterCond} (i.e. $2\mathcal{C}_\Omega/A<1$), we obtain
$$
\frac{1}{2}\left(1-\frac{\mathcal{C}_\Omega}{A}\right)\|\nabla_xe_\pi\|^2_{ L^2(\Omega)}\le C\|\pi-\varphi^H\|^2_{\H^1_0(\Omega)}+\frac{(J^{H,h})^2}{A}
$$
As in the proof of Theorem \ref{uniquenessTheorem}, the assumptions \ref{as:rhs} and \ref{as:g-functional} yield the estimate
$$
(J^{H,h})^2\le 2\theta^2\|e_\pi\|^2_{ L^2(\Omega)}+2C_g\theta^2\|e_\rho\|^2_{ L^2(\Omega\times Y)}
$$
 Using (\ref{eq:poincareIneq}) again, we get
\begin{align}
\nonumber
&\frac{1}{2}\left(1-\frac{\mathcal{C}_\Omega}{A}-\frac{4\theta^2\mathcal{C}_\Omega}{A}\right)\|\nabla_xe_\pi\|^2_{ L^2(\Omega)}\le\\
\nonumber
&\le C\|\pi-\varphi^H\|^2_{\H^1_0(\Omega)}+\frac{2C_g\theta^2}{A}\|e_\rho\|^2_{ L^2(\Omega\times Y)}.
\end{align}
By the assumptions, i.e. $1+4\theta^2<3/2$ and $\mathcal{C}_\Omega/A<1/2$, we get 
$$
<te\left(1-\frac{\mathcal{C}_\Omega}{A}(1+4\theta^2)\right)>1/4.
$$
Hence, 
\begin{equation}
    \label{eq:pi-rateEstimate}
    \|\nabla_xe_\pi\|_{ L^2(\Omega)}\le C\|\pi-\varphi^H\|_{\H_0^1(\Omega)}+C\|e_\rho\|_{ L^2(\Omega\times Y)}.
\end{equation}
By (\ref{eq:proj_2}), we have $\|\pi-\varphi^H\|_{\H_0^1(\Omega)}=\mathcal{O}(H)$. Using a standard duality argument as in \cite{larsson2008partial}, we get $\|e_\pi\|_{ L^2(\Omega)}\le CH\|e_\pi\|_{\H^1_0(\Omega)}$.

We proceed to demonstrate that $\|e_\rho\|_{ L^2(\Omega\times Y)}=\mathcal{O}(H^2+h^2)$. Write
\begin{equation}
        e_\rho = \rho^{H,h} - \rho = (\rho^{H,h}- \ri_H\ri_h\rho) + (\ri_H\ri_h\rho - \rho) =: \theta + \psi.
        \label{eq:theta_psi}
    \end{equation}
    We bound $\psi$ by using Lemma~\ref{lem:proj_error_est}:
    \begin{equation}
        \begin{split}
            &||\psi(t)||_{ L^2(\Omega; L^2(Y))} \leq \gamma_3(H^2+ h^2)||\rho||_{ L^2(\Omega;\H^2(Y))\cap  L^2(Y;\H^2(\Omega))},\\
            &= \gamma_3(H^2+ h^2)\left|\left|\rho_I + \int_0^t \partial_t \rho ds\right|\right|_{ L^2(\Omega;\H^2(Y))\cap  L^2(Y;\H^2(\Omega))},
        \end{split}
        \label{eq:conv_psi}
    \end{equation}
    and bound $\theta$ from \eqref{eq:theta_psi} using the formulation: for all $\varphi \in V^h$ we have that
    \begin{equation}
      \begin{split}
          &\langle \partial_t \theta,\varphi \rangle_{ L^2(\Omega;L^2(Y))} + D\langle \nabla\theta,\nabla\varphi \rangle_{ L^2(\Omega;L^2(Y))}\\
          &= -\langle\ri_h\partial_t\rho,\varphi\rangle_{ L^2(\Omega;L^2(Y))} - D \langle \nabla\rho,\nabla\varphi \rangle_{ L^2(\Omega;L^2(Y))},\\
          &= \langle \partial_t \rho - \ri_h \partial_t\rho,\varphi \rangle_{ L^2(\Omega;L^2(Y))},\\
          &= \langle \partial_t \psi ,\varphi \rangle_{ L^2(\Omega;L^2(Y))}.
      \end{split}
      \label{eq:theta_bound}
    \end{equation}
    Substituting $\varphi = \theta$ in \eqref{eq:theta_bound} yields:
    \begin{equation}
        \begin{split}
            &\frac{1}{2}\frac{d}{dt}||\theta||_{ L^2(\Omega; L^2(Y))}^2 + D||\nabla \theta||_{ L^2(\Omega; L^2(Y))}^2 \\
            &=\left( \partial_t \rho - \ri_H\ri_h \partial_t\rho,\theta \right),\\
            &\leq \left|| \partial_t \rho - \ri_H\ri_h \partial_t\rho|\right|_{ L^2(\Omega; L^2(Y))} \left||\theta|\right|_{ L^2(\Omega; L^2(Y))},\\
            & \leq \gamma_3(h^2+ H^2)||\partial_t \rho||_{ L^2(\Omega;\H^2(Y))}||\theta||_{ L^2(\Omega; L^2(Y))}.
        \end{split}
        \label{eq:theta_ode}
    \end{equation}
    Dividing the left and right hand side of \eqref{eq:theta_ode} by $||\theta||$, we obtain:
    \begin{equation}
        \begin{split}
            \frac{d}{dt} ||\theta||_{ L^2(\Omega; L^2(Y))}&\leq \gamma_3(h^2+ H^2)||\partial_t\rho||_{ L^2(\Omega;\H^2(Y))},\\
            ||\theta(t)||_{ L^2(\Omega; L^2(Y))} &\leq ||\theta(0)||_{ L^2(\Omega; L^2(Y))} + \gamma_3(h^2+ H^2)\int_0^t||\partial_t\rho||_{ L^2(\Omega;\H^2(Y))}dx,\\
            &\leq ||\rho^{H,h}_I - \rho_I||_{ L^2(\Omega; L^2(Y))} + ||\rho_I - \ri_H\ri_h\rho_I||_{ L^2(\Omega; L^2(Y))} \\
            &\quad + \gamma_3(h^2+ H^2)\int_0^t||\partial_t\rho||_{ L^2(\Omega;H^2(Y))}dx,\\
            &\leq \gamma_3(h^2+ H^2)\left(c_I + C + \int_0^t||\partial_t\rho||_{ L^2(\Omega;H^2(Y))}dx \right).
        \end{split}
        \label{eq:conv_theta}
    \end{equation}
    Because of \ref{as:smooth_initial}, the Galerkin projection error of the initial condition satisfies:
    \begin{equation}
        ||\rho_I - \rho^{H,h}_I||_{ L^2(\Omega; L^2(Y))} \leq c_I(H^2 + h^2).
    \end{equation}
    Combining \eqref{eq:conv_psi} and \eqref{eq:conv_theta} proves the desired estimate in \eqref{eq:conv_rate_rho}. 
    \begin{equation}
        ||\rho^{H,h} - \rho||_{ L^2(\Omega; L^2(Y))} = ||\theta + \psi||_{ L^2(\Omega; L^2(Y))} \leq C(H^2 + h^2)||\partial_t \rho||_{ L^2(\Omega;\H^2(Y))}.
        \label{eq:conv_in_rho}
    \end{equation}
    Finally, \eqref{eq:conv_rate_pi} follows by combining \eqref{eq:pi-rateEstimate} with \eqref{eq:conv_in_rho}.
\end{proof}

 \section{Implementation}
\label{sec:implementation}
In this section, we discuss a time-discretized version of \eqref{eq:weak_pi}-\eqref{eq:weak_rho} i.e., a fully discretized system, and provide details and performance results of the implementation of this system.

\subsection{Setup}
We implement the finite element formulation of this problem using \dealii{} (\cite{dealII}), a C++ library that computes numerical approximations to finite element problems on quadrilateral meshes.

To account for the scale separated multiscale structure, we implement this system using a heterogeneous multiscale method (see e.g. \cite{engquist07} for an introduction). Alternative multiscale finite element structures are the FEM$^2$ method (\cite{Varvara}) and the multiscale finite element method (MsFEM \cite{efendiev09}). Both of these frameworks generally require a formulation in which the size relation $\varepsilon$ between the macroscopic scale and the microscopic scale must be resolved. Since our problem is completely scale-separated, we opt for a heterogeneous multiscale method.

We build the microscopic systems by assigning a microscopic grid for every degree of freedom on the macroscopic grid. 
Since we use nodal basis functions, every degree of freedom corresponds to a single physical location in the macroscopic grid.
By allowing the microscopic systems to correspond with degrees of freedom, by integrating the microscopic finite element functions on the finite element domain, we obtain a finite element function on the macroscopic domain, for which we can use classical finite element techniques.

\dealii{} has no specific support for multiscale problems. However, we can build upon its structure to create new components that can deal with objects like multiscale functions and multiscale solutions.

For our multiscale implementation, we need not use a separate instance for each macroscopic degree of freedom.
Specifically, we assume the same microscopic grid and triangulation for each microscopic instance. This allows us to reuse and share microscopic data structures throughout the simulation.

\subsection{Manufactured system}
We test the quality and correctness of the scheme and its implementation by simulating a more general problem, for which we can manufacture solutions. We compute the solution of this problem on subsequently finer meshes, and check if convergence rates are according to expectations.
The manufactured problem ($P_M$) is defined as follows:
\begin{align*}
    &-A\Delta_x\pi=f(\pi,g(\rho)) + p(t,x)  &\mbox{ in }S\times\Omega,\\
    &\partial_t\rho-D\Delta_y\rho = q(t,x,y)  &\mbox{ in }S\times\Omega\times Y,\\
    &D\nabla_y\rho\cdot n_y= k(\pi+p_F-R\rho) + r(t,x,y)&\mbox{ in } S\times\Omega\times\Gamma_R,\\
    &D\nabla_y\rho\cdot n_y=s(t,x,y)&\mbox{ in }S\times\Omega\times\Gamma_N,\\
    &\pi=u(t,x) &\mbox{ in }S\times\partial\Omega,\\
    &\rho(t=0,x,y)=\rho_I(x,y)&\mbox{ in } \overline{\Omega\times Y},
\end{align*}
where, in our test scenario, $p,q,r,s,u$ are chosen such that they lead to a $\pi$ and $\rho$ of which we know the explicit form. Note that if we let $p=q=r=s=u=0$, $(P_M)$ reduces to $(P_1)$.
Additionally, $f$ is defined in accordance to the first example provided in Section~\ref{sec:preliminaries}:
\begin{equation}
    f(\pi,g(\rho)) := \theta \min \left( |\pi|,|\pi|^\alpha \right) \min \left( 1,|g(\rho)| \right),
\end{equation}
where $\theta$ is defined in Table~\ref{tab:parameter}, $\alpha=0.5$ and $g$ is defined as
\begin{equation}
    g(s):= \int_Y s(\cdot,y) dy.
\end{equation}
For the remainder of this discussion, we choose $\Omega = [-1,1]^2$, $Y= [-1,1]^2$, $\Gamma_R = \{1,-1\} \times [-1,1]$, and $\Gamma_N = [-1,1] \times \{1,-1\}$.
This results in a convex domain $\Omega$ of which the Poincar\'e constant $\mathcal{C}_\Omega$ can be computed exactly according to \cite{payne60} and has a value of $\mathcal{C}_\Omega = \frac{2\sqrt{2}}{\pi}$, where $\pi$ in this expression represents the mathematical constant.

\subsection{Time discretization}
We discretize the microscopic equation in time with an implicit Euler scheme, while we use a Picard-like iteration scheme for the macroscopic equation. The Picard-like iterations avoid approximating the nonlinear term $f$ via e.g. Newton's method, and the implicit Euler scheme ensures more stability in the microscopic equation without significant complications, since this equation is linear.

We discretize time domain $S$ with time steps $0 < t_1 < \ldots < t_{N_T}$. Let $\tau_n$ be the time step size in time step $n$. Then, given data from time step $n-1$, we compute the approximations in time step $n$ by solving:

\begin{align}
    &A \int_\Omega \nabla \pifem \cdot \nabla \phi dx = \int_\Omega f\left( \pifemprev, g \left( \rhofemprev \right) \right)+ p^H_{n-1} \phi dx,\\
    &\int_Y \frac{\rhofem - \rhofemprev}{\tau_n} \psi + D\nabla\rhofem  \cdot \nabla \psi dy = \int_Y  q^{H,h}_{n} \psi dy \\
    & \quad+ \int_{\Gamma_R} \left( \kappa \left(\pifem + p_F - R\rhofem \right) + r^{H,h}_{n}\right)\psi + \psi d\sigma_y + \int_{\Gamma_N} s_n^{H,h} \psi d\sigma_y.
\end{align}
We postpone proving the convergence of this scheme to a forthcoming publication.

Prior to solving this scheme, one first needs to determine for $\pi_I(x) = \pi(x,0)$ as a solution to \eqref{eq:init_pi}.
Choosing an initial guess rather than computing an actual value will create an error that propagates into future iterations of the scheme, because of the time-dependent nature of the equations.

We account for this by first solving the following discrete system
\begin{equation*}
    A \int_\Omega \nabla \pi^H_{I,k} \cdot \nabla \phi dx = \int_\Omega f\left( \pi^H_{I,k-1} g \left( \rho^{H,h}_I \right)\right)+ p^H_{0}\phi dx,
\end{equation*}
subject to the macroscopic boundary conditions for $\pi$.
Starting with an initial guess for $\pi^H_{I,0}$, we iterate until $\norm{\pi^H_{I,k} - \pi^H_{I,k-1}}_\MLL < \varepsilon$ for some threshold $\varepsilon$.
Then, $\pi^H_{I,k}$ is chosen as the initial value $\pi^H_I$ for ($P_M$).

\subsection{Results}
We fix the parameters according to the values presented in Table~\ref{tab:parameter}.

\begin{table}[ht]
    \centering
    \begin{tabular}{|l|l|}
    \hline
    \textbf{Parameters} & \textbf{Values} \\ \hline
    $A$                 & 3               \\ \hline
    $D$                 & 1               \\ \hline
    $\theta$            & 0.25             \\ \hline
    $\kappa$            & 1               \\ \hline
    $p_F$               & 4               \\ \hline
    $R$                 & 2               \\ \hline
    \end{tabular}
    \caption{Parameter values used in the simulation.}
    \label{tab:parameter}
\end{table}
We approximate the solutions to ($P_M$) with piece-wise linear basis functions, and compute the cell contributions with a third order Gaussian quadrature rule.
We test the implementation by solving ($P_M$), choosing $p,q,r,s$ such that solutions $\pi,\rho$ become:
\begin{equation}
    \begin{split}
        \pi(x,t) &= \cos\left(2x_0e^{-Dt} \sqrt{ \frac{\theta}{A}} \right) + \cos\left(2x_1e^{-Dt} \sqrt{ \frac{\theta}{A}} \right)\\
        \rho(x,y,t) &= e^{-2Dt}\left(\cos \left( y_0 \right) \cos \left( y_1 \right) + \sin(x_0) + \sin(x_1) + 6\right)\
    \end{split}
\end{equation}

Running this simulation for increasingly smaller $\tau_n$ (to account for the time discretization error), $h$ and $H$, yields the errors presented in Table~\ref{tab:macro_convergence} and Table~\ref{tab:micro_convergence}. The values $p_i$ and $q_i$ for $i=1,2$ represent the (subsequent) observed order of convergence of the finite element error and the gradient error, respectively. For example: refining the macroscopic grid results in an observed error of size $\norm{e_\pi}_\MLL = \bigo{H^{p_1}}$.

\begin{table}[ht]
\centering
\begin{tabular}{c|c|c|c|c|c|c}
\toprule
 MDoFs & $H$ & $\tau_n$  &$\norm{e_\pi}_\MLL$ & $\norm{\nabla_x e_\pi}_\MLL$ & $p_1$ & $q_1$ \\
\midrule
    81 & 2.500e-01 &    0.2500 & 2.329e-03 & 2.542e-02 &       - &    - \\
   144 & 1.818e-01 &    0.1250 & 1.421e-03 & 1.823e-02 & 1.551 & 1.044 \\
   289 & 1.250e-01 &    0.0625 & 6.114e-04 & 1.243e-02 & 2.251 & 1.022 \\
   576 & 8.696e-02 &    0.0312 & 2.961e-04 & 8.601e-03 & 1.998 & 1.015 \\
  1089 & 6.250e-02 &    0.0156 & 1.537e-04 & 6.157e-03 & 1.985 & 1.012 \\
  2116 & 4.444e-02 &    0.0078 & 8.125e-05 & 4.369e-03 & 1.870 & 1.006 \\
  4225 & 3.125e-02 &    0.0039 & 3.886e-05 & 3.069e-03 & 2.094 & 1.003 \\
\bottomrule
\end{tabular}
\caption{Macroscopic error and convergence rates for the manufactured problems. $p$ represents the subsequent observed order of convergence of the finite element error, $q$ represents the subsequent observed order of convergence of the error of its gradient.}
\label{tab:macro_convergence}
\end{table}

\begin{table}[ht]
\centering
\begin{tabular}{c|c|c|c|c|c|c}
\toprule
    mDoFs & $h$ & $\tau_n$ &  $\norm{e_\rho}_\mLL$ &$\norm{\nabla_y e_\rho}_\mLL$ & $p_2$ & $q_2$\\
\midrule
     6561 & 2.500e-01 &    0.2500 & 2.743e-01 & 3.560e-01 &       - &      - \\
    20736 & 1.818e-01 &    0.1250 & 1.486e-01 & 2.000e-01 & 1.925 & 1.811 \\
    83521 & 1.250e-01 &    0.0625 & 7.759e-02 & 1.103e-01 & 1.734 & 1.588 \\
   331776 & 8.696e-02 &    0.0312 & 3.968e-02 & 6.198e-02 & 1.848 & 1.588 \\
  1185921 & 6.250e-02 &    0.0156 & 2.006e-02 & 3.686e-02 & 2.066 & 1.574 \\
  4477456 & 4.444e-02 &    0.0078 & 1.008e-02 & 2.307e-02 & 2.019 & 1.374 \\
 17850625 & 3.125e-02 &    0.0039 & 5.057e-03 & 1.501e-02 & 1.958 & 1.220 \\
\bottomrule
\end{tabular}
\caption{Microscopic error and convergence rates for the manufactured problems. The microscopic degrees of freedom (mDoFs) are summed over all microscopic grids. $p$ represents the subsequent observed order of convergence of the finite element error, $q$ represents the subsequent observed order of convergence of the error of its gradient.}
\label{tab:micro_convergence}
\end{table}

Ignoring the time discretization as an error source, we observe the finite element error behaves according to the theory:
\begin{equation}
    \norm{e_\pi}_\MLL + \norm{e_\rho}_\mLL = C \left(h^2 + H^2\right) + \textrm{h.o.t.},
\end{equation}
thereby confirming Theorem~\ref{thm:priori}.
Noteworthy is the fact that the microscopic error accounts for a large part in the total error. This might be due to the fact that the macroscopic equation lacks a time derivative.

For $t=0.5$, a representation of the macroscopic solution and a graphical representation of the microscopic solutions are represented in Figure~\ref{fig:macro_plot} and Figure~\ref{fig:patched}, respectively.

\begin{figure}[ht]
    \centering
    \includegraphics[width=0.8\textwidth]{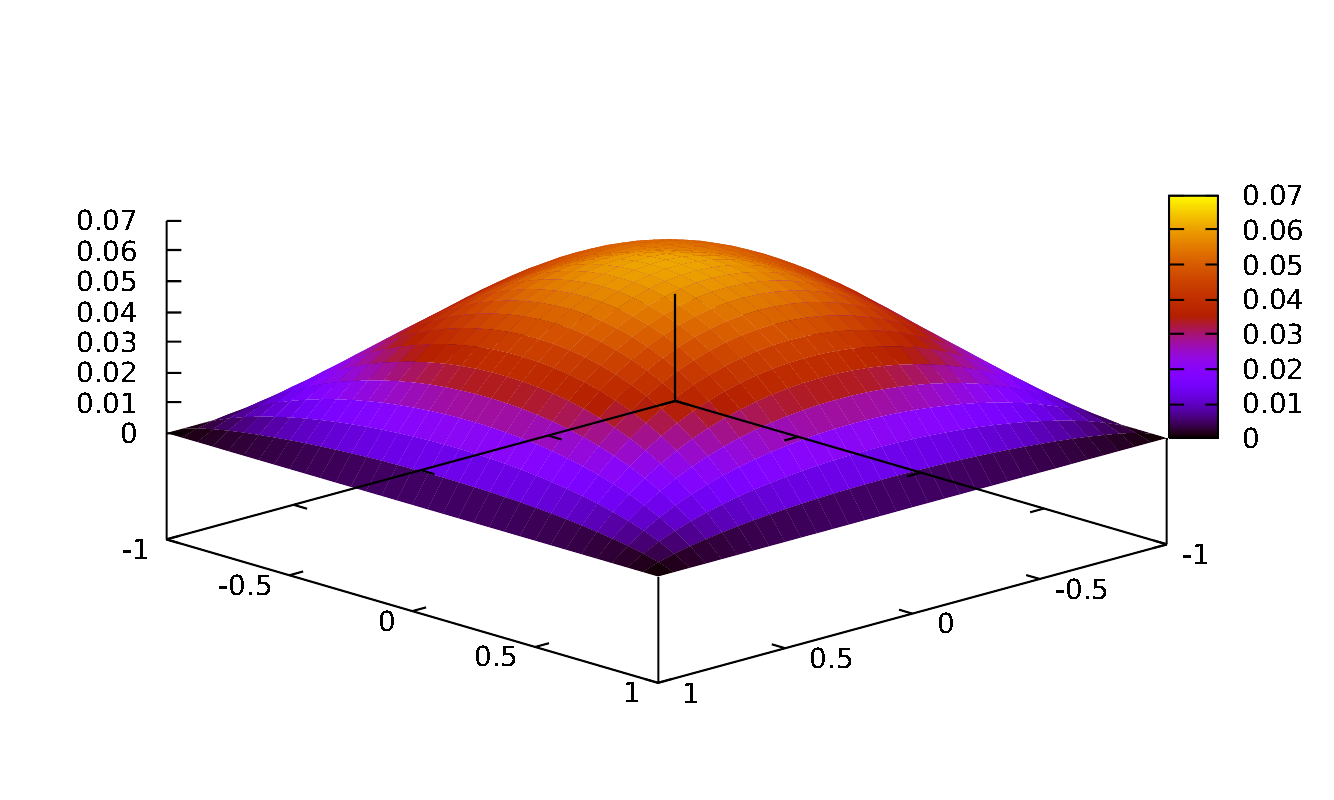}
    \caption{A snapshot of the macroscopic solution of $P_1$ for $t=0.5$.}
    \label{fig:macro_plot}
\end{figure}

\begin{figure}[ht]
    \centering
    \includegraphics[width=0.8\textwidth]{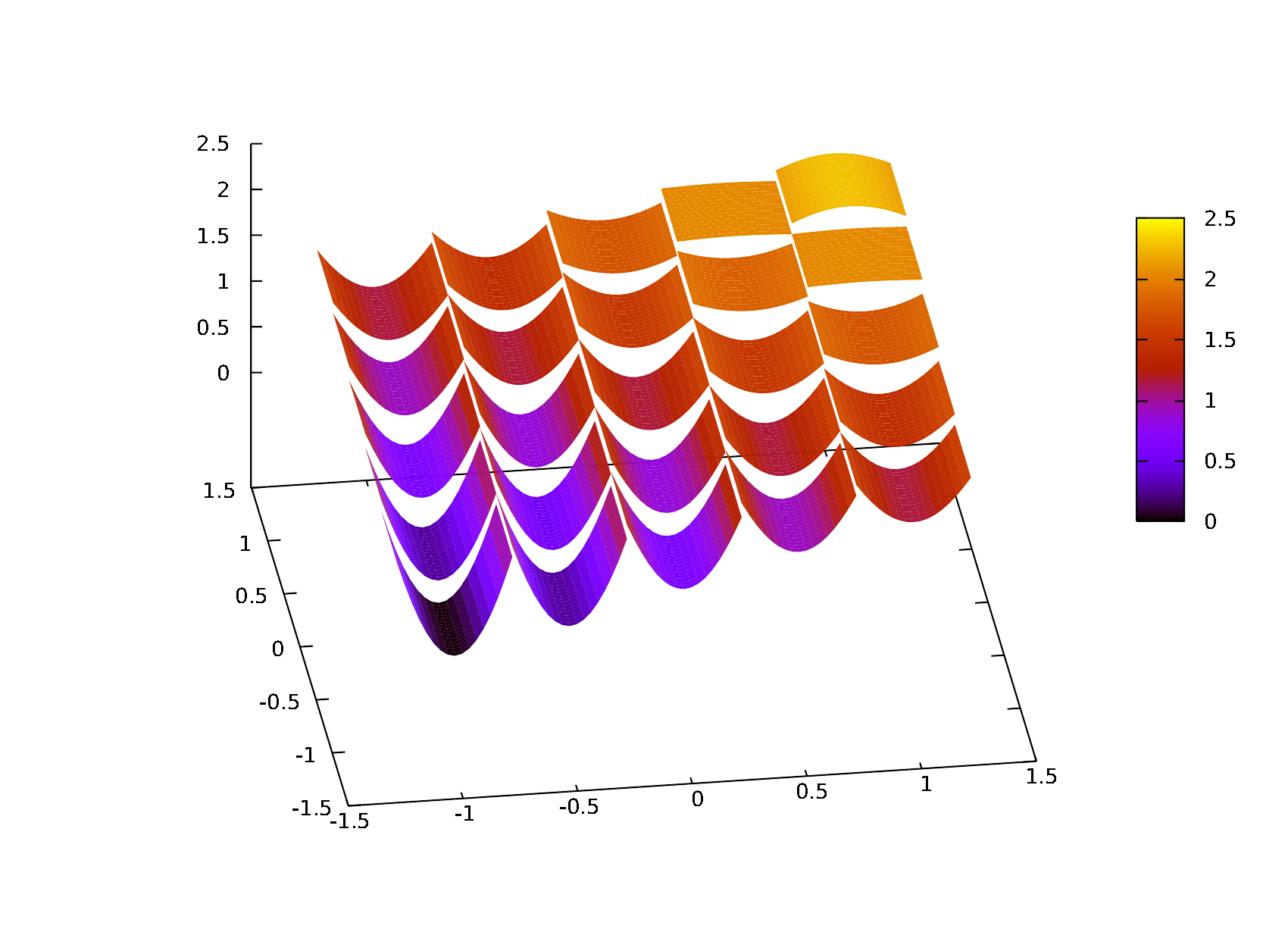}
    \caption{A collection of microscopic solutions at their respective locations in a coarse macroscopic grid, $t=0.5$.}
    \label{fig:patched}
\end{figure}

 \section{Conclusion and future work}
\label{sec:conclusion}
 We constructed a semidiscrete Galerkin approximation of our semi-linear elliptic-parabolic two scale system and  showed that this  approximation is well-posed. Furthermore,  the obtained sequence of Galerkin approximants based on finite elements converges in suitable spaces to the weak solution to the continuous system.  Under additional regularity assumptions, we derived  \textit{a priori} rates of convergence of our approximation to the target weak solution.
Finally, we implemented the fully discrete system in \dealii{}, tested the convergence rates in practice, and observed the behaviour of the system for a certain set of parameters. We found convergence behaviour according to our analysis. Using this setup, we are now able to deal with truly scale-separated, two-scale problems, using a method fitting in the HMM framework.

As a natural next step, in a forthcoming work a numerical analysis study will address the  quality of the fully discrete two-scale Galerkin approximation as well as improved  numerical implementations of the method so that the proven convergence rates can be confirmed and variants of macroscopic refinement strategies can be tested, possibly a two-scale elliptic-elliptic PDE system (obtained by letting $t \to \infty$  in our elliptic-parabolic formulation).

\section*{Acknowledgements}

The authors acknowledge fruitful discussions with Prof.~M.~Asadzadeh (Chalmers University, Gothenburg, Sweden). OR thanks Dr.~D.~Tagami (Kyushu University, Japan) for valuable feedback and acknowledges partial support from Kungl.~Vetenskapsakademien, Sweden. ML, AM and OR thank Dr.~O.~Lakkis and Dr.~C.~Venkataraman (both with University of Sussex, UK) for the intensive interactions during the Hausdorff Trimester Program ``Multiscale Problems: Algorithms, Numerical Analysis and Computation'' (Bonn, January 2017) and discussions at the University of Sussex.
 \bibliography{literature}
 \bibliographystyle{plain}
\end{document}